\let\savedVert\|

\makeatletter
\def\cup@reference@code{%
  \RequirePackage[style=numeric,sorting=none,backend=biber]{biblatex}%
  \renewcommand*{\bibfont}{\footnotesize}%
}
\makeatother

\PassOptionsToPackage{table}{xcolor}

\documentclass[
  journal=largetwo,
  manuscript=article,
  manuscriptlabel={Research Preprint},
  logo=false,
  simfonts=false,
  year=2026,
  volume=1,
]{cup-journal}

\let\|\savedVert

\usepackage[T1]{fontenc}
\usepackage{amsmath,amssymb,amsthm}
\usepackage{mathtools}
  
\usepackage[varg]{newtxmath}
\DeclareMathSizes{10.95}{9.95}{7}{5}
\usepackage{graphicx}
\usepackage{booktabs}
\usepackage{multirow}
\usepackage{makecell}
\usepackage{enumitem}
\usepackage{needspace}
\setlist{nosep,leftmargin=*}
\usepackage{xcolor}
\usepackage{microtype}
\usepackage[colorlinks=true,linkcolor=blue,citecolor=blue]{hyperref}
\hypersetup{
  pdftitle={A Second-Moment Theory for Floating-Point Reduction Trees},
  pdfauthor={Piyush Sao; Narasinga Miniskar; Pedro Valero-Lara; Keita Teranishi; Sudip Seal},
  pdfsubject={Second-moment analysis of floating-point reduction trees},
  pdfkeywords={floating-point arithmetic, reduction tree, rounding error, variance analysis, numerical stability}
}
\usepackage{cleveref}
\usepackage{orcidlink}

\usepackage{etoolbox}
\AtBeginEnvironment{table}{\footnotesize\setlength{\tabcolsep}{3pt}}
\newcommand{\compactdisplayspacing}{%
  \setlength{\abovedisplayskip}{8pt plus 2pt minus 3pt}%
  \setlength{\belowdisplayskip}{8pt plus 2pt minus 3pt}%
  \setlength{\abovedisplayshortskip}{0pt plus 2pt}%
  \setlength{\belowdisplayshortskip}{5pt plus 2pt minus 2pt}%
}
\AtBeginDocument{\compactdisplayspacing}
\AtBeginEnvironment{align*}{\small\compactdisplayspacing}

\definecolor{primary}{HTML}{006BA2}
\definecolor{accent}{HTML}{E3120B}
\definecolor{neutral}{HTML}{4A4A4A}

\definecolor{plotblue}{HTML}{006BA2}
\definecolor{plotred}{HTML}{E3120B}
\definecolor{plotgreen}{HTML}{2E7D32}
\definecolor{plotorange}{HTML}{FF6F00}

\definecolor{boxfill}{HTML}{E8F4FD}
\definecolor{boxborder}{HTML}{006BA2}

\definecolor{hlblue}{HTML}{E3F2FD}
\definecolor{hlred}{HTML}{FFEBEE}

\newtheorem{theorem}{Theorem}[section]

\newtheorem{proposition}[theorem]{Proposition}
\newtheorem{corollary}[theorem]{Corollary}

\theoremstyle{definition}
\newtheorem{definition}[theorem]{Definition}

\theoremstyle{remark}

\newtheorem{assumption}{Assumption}

\usepackage[most]{tcolorbox}

\definecolor{theoremgreen}{HTML}{2E7D32}
\definecolor{mathboxgray}{HTML}{6B7280}

\tcbset{mathboxbase/.style={%
  enhanced, breakable, sharp corners, frame hidden, boxrule=0pt,
  left=10pt, right=6pt, top=5pt, bottom=5pt,
  before skip=8pt, after skip=8pt},
  theorembox/.style={%
    mathboxbase, colback=theoremgreen!6,
    borderline west={3pt}{0pt}{theoremgreen}},
  neutralmathbox/.style={%
    mathboxbase, colback=white, opacityback=0,
    borderline west={2pt}{0pt}{mathboxgray}}}

\tcolorboxenvironment{theorem}{theorembox}
\tcolorboxenvironment{lemma}{neutralmathbox}
\tcolorboxenvironment{proposition}{neutralmathbox}
\tcolorboxenvironment{corollary}{neutralmathbox}
\tcolorboxenvironment{definition}{neutralmathbox}
\tcolorboxenvironment{example}{neutralmathbox}
\tcolorboxenvironment{problem}{neutralmathbox}
\tcolorboxenvironment{assumption}{neutralmathbox}
\tcolorboxenvironment{remark}{neutralmathbox}
\tcolorboxenvironment{note}{neutralmathbox}

\AtBeginBibliography{\sloppy}

\newcommand{\E}{\mathbb{E}}

\newcommand{\ra}[1]{\renewcommand{\arraystretch}{#1}}

\makeatletter
\def\cup@journal@name{Floating-Point Reduction Trees}
\def\cup@manuscript{preprint}
\def\cup@year{July 2026}
\makeatother

\title{A Second-Moment Theory for Floating-Point\\
  Reduction Trees}
\author{Piyush Sao\,\orcidlink{0000-0002-9432-5855}}
\affiliation{Oak Ridge National Laboratory, Oak Ridge, TN, USA}
\email[Piyush Sao]{saopk@ornl.gov}
\author{Narasinga Miniskar\,\orcidlink{0000-0001-8259-8891}}
\affiliation{Oak Ridge National Laboratory, Oak Ridge, TN, USA}
\email[Narasinga Miniskar]{miniskarnr@ornl.gov}
\author{Pedro Valero-Lara\,\orcidlink{0000-0002-1479-4310}}
\affiliation{Oak Ridge National Laboratory, Oak Ridge, TN, USA}
\email[Pedro Valero-Lara]{valerolarap@ornl.gov}
\author{Keita Teranishi\,\orcidlink{0000-0001-6647-2690}}
\affiliation{Oak Ridge National Laboratory, Oak Ridge, TN, USA}
\email[Keita Teranishi]{teranishik@ornl.gov}
\author{Sudip Seal\,\orcidlink{0000-0003-3233-0656}}
\affiliation{Oak Ridge National Laboratory, Oak Ridge, TN, USA}
\email[Sudip Seal]{sealsk@ornl.gov}
\makeatletter
\def\cup@contact@details{%
  {\emailfont\textbf{Emails (author order): }
  \href{mailto:saopk@ornl.gov}{saopk@ornl.gov};
  \href{mailto:miniskarnr@ornl.gov}{miniskarnr@ornl.gov};
  \href{mailto:valerolarap@ornl.gov}{valerolarap@ornl.gov};
  \href{mailto:teranishik@ornl.gov}{teranishik@ornl.gov};
  \href{mailto:sealsk@ornl.gov}{sealsk@ornl.gov}.}%
}
\makeatother
\keywords{floating-point arithmetic; reduction tree; rounding error;
  variance analysis; numerical stability}

\begin{document}

\begin{abstract}
Summation error depends on partial-sum order, which standard worst-case bounds omit. To capture this dependence, we derive an exact mean-square error (MSE) recurrence for a binary tree $T$ under conditionally unbiased rounding. With unit roundoff $u$, the constant-$\nu$ model (Section~\ref{sec:constant-nu}) sets the local variance at the pre-rounding value $x$ to $\nu u^2x^2$. Its leading tree-dependent cost for the input vector $p$ is $p^\top K_Tp$, where the common-ancestor kernel $K_T$ (Section~\ref{sec:ancestor-kernel}) counts internal ancestors shared by leaves $i$ and $j$. For i.i.d. inputs of mean $\mu$ and variance $\tau^2$, this expected cost is $\tau^2\Lambda_1(T)+\mu^2\Lambda_2(T)$; the scalar tree statistics $\Lambda_1$ and $\Lambda_2$ are defined in Section~\ref{sec:iid-compression}. Here $\Lambda_1$ is total leaf depth, while $\Lambda_2$ sums squared internal-subtree sizes. Thus $\Lambda_1$ governs centered inputs, while $\Lambda_2$ captures nonzero means.

We use these statistics to characterize optimal tree topologies and schedules. Balanced and sequential trees attain the centered extrema. For $k$ inputs, optimal two-stage sequential blocking yields root-mean-square (RMS) error that scales as $k^{3/4}$. For fixed-stage hierarchies, geometric schedules are optimal for centered inputs, whereas the optimal noncentered stage exponents halve successively. For independent centered inputs with unequal variances, Huffman coding minimizes variance-weighted depth over free leaf assignments and unconstrained trees. We extend the kernel to matrix multiplication through operand Gram matrices.

To evaluate these predictions, we test the approximation under round-to-nearest using exact residuals. Across binary64, binary32, and software-emulated binary16 and bfloat16, the model recovers the ordering among tree topologies; $K_T$ tracks first-order autoregressive (AR(1)) partial-sum costs. For general matrix--matrix multiplication (GEMM), independently calibrated predictions differ from measurements by at most $3\%$ on the tested grid. A reduction tree extracted from an array library predicts the measured RMS scaling. However, stagnation and bias in positive low-precision sums limit the model's applicability.

\end{abstract}

{%
\renewcommand{\thefootnote}{\fnsymbol{footnote}}%
\footnotetext{\noindent\rule{0.8\columnwidth}{0.4pt}\\[2pt]This manuscript has been authored by UT-Battelle,
  LLC under Contract No.\ DE-AC05-00OR22725 with the
  U.S.\ Department of Energy. The publisher, by accepting the
  article for publication, acknowledges that the United States
  Government retains a non-exclusive, paid-up, irrevocable,
  world-wide license to publish or reproduce the published form
  of this manuscript, or allow others to do so, for United States
  Government purposes. The Department of Energy will provide
  public access to these results of federally sponsored research
  in accordance with the DOE Public Access Plan
  (\url{http://energy.gov/downloads/doe-public-access-plan}).}%
}

\section{Introduction}
\label{sec:intro}

Every parallel sum has a reduction tree: a binary tree whose leaves are the inputs and whose internal nodes represent the pairwise additions that combine them. Consider inputs $p_1,\ldots,p_k$ with exact sum $s=\sum_{i=1}^k p_i$, and let $\widehat s$ be the computed result in floating-point arithmetic with unit roundoff $u$. An implementation may choose a sequential, pairwise, blocked, or hybrid tree for locality, synchronization, or bandwidth. We call this tree structure the \emph{reduction geometry}. Although often chosen for performance, the geometry determines which intermediate values are rounded and how their errors accumulate. Two trees with the same leaves and operation count can therefore produce different errors. By contrast, the standard tree-independent forward-error bound ignores this geometry:
\[
|\widehat s-s|\leq \gamma_{k-1}\sum_{i=1}^k|p_i|,
\qquad \gamma_n:=\frac{n u}{1-n u},
\]
for $(k-1)u<1$. This bound provides a worst-case guarantee but no tree-specific mean-square estimate for random inputs. It therefore cannot rank a mixed library tree against a pairwise tree on random data; a second-moment model can. Our analysis keeps the geometry that the bound discards, as Figure~\ref{fig:tree-statistics} illustrates. Internal subtree sizes determine two scalar statistics, while shared ancestry determines a matrix-valued kernel.

\begin{figure*}[t]
\centering
\includegraphics[width=\textwidth]{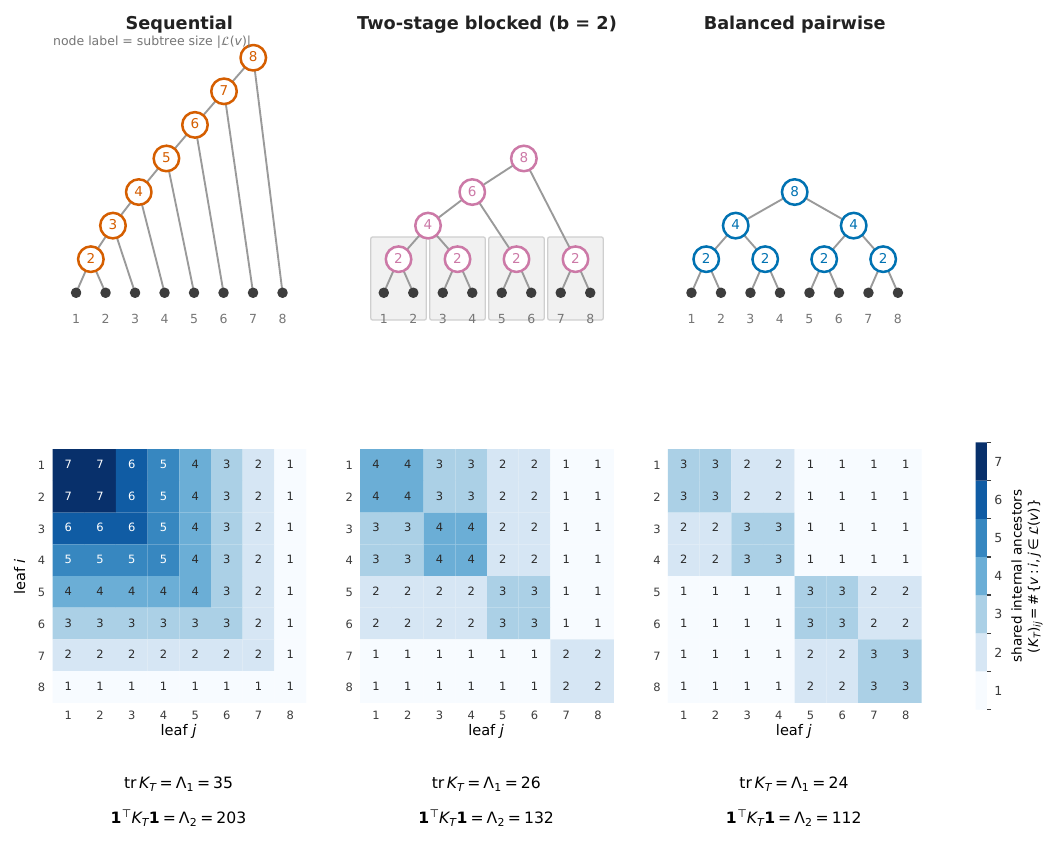}
\caption{Shared ancestry determines the second-moment cost for three reduction trees on the same $k=8$ inputs. Top: each internal-node label is its subtree size $|\mathcal L(v)|$; the blocked tree accumulates sequentially within blocks and across block totals. Bottom: $(K_T)_{ij}$ counts the internal ancestors shared by leaves $i$ and $j$. The displayed diagonal and all-entry sums give $\Lambda_1=\operatorname{tr}K_T$ and $\Lambda_2=\mathbf 1^\top K_T\mathbf 1$. Thus the sequential and blocked centered-RMS scales are $\sqrt{35/24}=1.21$ and $\sqrt{26/24}=1.04$ times the balanced-pairwise scale. Section~\ref{sec:ancestor-kernel} defines $K_T$, and Section~\ref{sec:iid-compression} derives the two statistics.}
\label{fig:tree-statistics}
\end{figure*}

On a computational tree, the sum of squared exact partial sums, rather than operation count alone, controls typical error growth \cite{HighamMary2019Probabilistic,HallmanIpsen2023PrecisionAware}. To make this quantity precise, write $p=(p_1,\ldots,p_k)^\top$. For each internal node $v$ of $T$, let $q_v$ be the exact sum of the leaves below $v$. The tree-dependent cost is $\sum_v q_v^2$, where the sum ranges over all internal nodes.

We represent this cost as a quadratic form using the \emph{common-ancestor kernel} $K_T$, the $k\times k$ matrix whose entry $(K_T)_{ij}$ counts the internal nodes that are ancestors of both leaves $i$ and $j$. Then
\[
\sum_vq_v^2=p^\top K_Tp.
\]
For random inputs with second-moment matrix $M:=\E_p[pp^\top]$, the expectation over $p$ yields the scalar contraction
\[
\E_p\!\left[\sum_vq_v^2\right]
=\E_p[p^\top K_Tp]
=\operatorname{tr}(K_TM)
=\langle K_T,M\rangle,
\]
where $\langle X,Y\rangle:=\operatorname{tr}(X^\top Y)$ is the Frobenius inner product. This contraction captures how the reduction geometry interacts with the input statistics. For i.i.d. inputs, it reduces to the two scalar statistics $\Lambda_1$ and $\Lambda_2$; the same construction extends to matrix multiplication.

To analyze the tree error $e_T:=\widehat s-s$, define $\operatorname{MSE}(e_T):=\E[e_T^2]$ and $\operatorname{RMS}(e_T):=\sqrt{\operatorname{MSE}(e_T)}$. We call inputs centered when their means are zero and noncentered otherwise. The analysis separates three layers of increasing specialization.

\emph{Layer 1 (exact recurrence).} Standard stochastic rounding randomly selects one of the adjacent floating-point values that bracket an operand, with probabilities chosen to preserve that operand in expectation. Because stochastic rounding is conditionally unbiased, its accumulated error satisfies an exact recurrence with operand-dependent local variances (Theorem~\ref{thm:main}).

\emph{Layer 2 (constant-$\nu$ model).} The exact recurrence depends on the pre-rounding value at every node. We replace each operand-dependent local variance by $\nu u^2x^2$, where $\nu$ is a dimensionless local second-moment coefficient. This substitution preserves the tree structure while reducing the variance model to one coefficient (Corollary~\ref{cor:constant-nu}).

\emph{Layer 3 (round-to-nearest calibration).} Under deterministic round-to-nearest, local errors do not generally have conditional mean zero. We therefore treat the same leading expression as an empirical MSE approximation over random inputs and calibrate $\nu$ from independent node-local errors. Section~\ref{sec:experiments} quantifies both its accuracy and its failure regimes. A fitted round-to-nearest value of $\nu$ serves as an empirical calibration parameter rather than an arithmetic constant.

\subsection{Prediction Workflow}
\label{sec:prediction-workflow}

For pure summation, the constant-$\nu$ model gives the following leading prediction on an extracted tree with i.i.d. summands of mean $\mu$ and variance $\tau^2$:
\begin{equation}
\label{eq:workflow}
\operatorname{RMS}(e_T)\approx
u\sqrt{\nu\{\tau^2\Lambda_1(T)+\mu^2\Lambda_2(T)\}}.
\end{equation}
Evaluating this prediction requires three ingredients: the tree topology, the input statistics, and an independent calibration. We organize them into the validation workflow in Table~\ref{tab:workflow}.

\begin{table}[t]
\centering
\caption{Prediction workflow for a fixed binary reduction tree.}
\label{tab:workflow}
\ra{1.15}
\begin{tabular}{@{}cl@{}}
\toprule
Step & Quantity \\
\midrule
1 & Extract $T$ and its arithmetic at every node. \\
2 & Accumulate $\Lambda_1,\Lambda_2$ in one tree traversal. \\
3 & Estimate $\mu,\tau^2$ (or $M$ for $\langle K_T,M\rangle$; \S\ref{sec:ancestor-kernel}). \\
4 & Calibrate $\nu$ independently for the arithmetic regime. \\
5 & Evaluate~\eqref{eq:workflow} and check model conditions. \\
\bottomrule
\end{tabular}
\end{table}

The same two-term decomposition identifies which tree statistic controls the prediction. Comparing its terms gives
\[
\begin{aligned}
\Lambda_1\text{ dominates if }&
\frac{|\mu|}{\tau}<\sqrt{\frac{\Lambda_1(T)}{\Lambda_2(T)}},\\
\Lambda_2\text{ dominates if }&
\frac{|\mu|}{\tau}>\sqrt{\frac{\Lambda_1(T)}{\Lambda_2(T)}}.
\end{aligned}
\]
For $k=4096$, the closed forms in Section~\ref{sec:iid-compression} give thresholds of approximately $0.019$ for a sequential tree and $0.038$ for a pairwise tree. Positive reductions such as softmax denominators, squared norms, and batch statistics can therefore enter the $\Lambda_2$-dominated regime even when their mean is modest relative to their standard deviation. The criterion supplies the tree-dependent rounding component of a precision policy that also accounts for arithmetic, conditioning, and performance. We develop this framework in stages: Section~\ref{sec:theorem} derives the exact recurrence and the common-ancestor kernel; Section~\ref{sec:exponents} optimizes the resulting tree statistics; Section~\ref{sec:gemm} extends the analysis to matrix multiplication; Section~\ref{sec:applications-context} maps implementations to trees; and Section~\ref{sec:experiments} validates the predictions experimentally.

Table~\ref{tab:notation} in \ref{app:notation} summarizes the core notation used throughout the scalar theory. We define symbols specific to hierarchy design, correlated inputs, GEMM, and calibration when they first appear.

\section{Conditional Second-Moment Model}
\label{sec:theorem}

Let $T$ be a full binary reduction tree with leaf values $p=(p_1,\ldots,p_k)^\top$. Write $\operatorname{int}(T)$ for its internal nodes and $h(T)$ for its height. For a node $v$, let $\mathcal L(v)$ be its leaf set, $q_v=\sum_{i\in\mathcal L(v)}p_i$ its exact partial sum, $\widehat q_v$ its computed value, and $e_v=\widehat q_v-q_v$ its accumulated error. Expectations over rounding variables for fixed $p$ are denoted by $\E_\xi[\cdot\mid p]$. Averaging over random inputs then uses $\E_p$. For deterministic round-to-nearest experiments, only $\E_p$ remains.

\subsection{An Exact Conditional Recurrence}

To derive the exact recurrence for conditionally unbiased rounding errors and control its cross terms, we condition on the information available just before node $v$ is rounded. We denote this information by the $\sigma$-algebra $\mathcal F_v^-$: it includes the fixed input $p$ and all rounding draws in the two child subtrees of $v$, but excludes the fresh draw at $v$ itself, hence the superscript ``$-$''. Conditioning on $\mathcal F_v^-$ fixes both the pre-rounding operand $x_v$ and the inherited error $x_v-q_v$. The cross term between that inherited error and the local error $\xi_v$ therefore vanishes in expectation. This conditioning underlies the exact recurrence; the remaining cross term between the two child errors vanishes by subtree independence.

\begin{definition}[Conditionally unbiased rounding errors]
\label{def:model}
Let $\operatorname{fl}_v$ denote the possibly random rounding map at node $v$. Let $\mathcal F_v^-$ be the $\sigma$-algebra generated by the fixed input $p$ and all rounding draws in the two child subtrees. With children $v_1,v_2$, the pre-rounding value
\[
x_v=\widehat q_{v_1}+\widehat q_{v_2}
=q_v+e_{v_1}+e_{v_2}
\]
is $\mathcal F_v^-$-measurable. The local error $\xi_v=\operatorname{fl}_v(x_v)-x_v$ satisfies
\[
\E_\xi[\xi_v\mid\mathcal F_v^-]=0,
\qquad
\E_\xi[\xi_v^2\mid\mathcal F_v^-]=\psi_v(x_v)\geq0.
\]
Thus $\psi_v(x)$ is the conditional local variance for operand $x$. Rounding draws in disjoint subtrees are independent conditional on $p$. Rounded product leaves, when present, obey analogous conditions and are independent of later accumulation draws.
\end{definition}

Keeping the operand- and node-dependent functions $\psi_v$ explicit preserves the exact stochastic-rounding recurrence. The constant-$\nu$ model below replaces them with homogeneous approximations.

\begin{theorem}[Exact conditional second-moment recurrence]
\label{thm:main}
Define $V_v=\E_\xi[e_v^2\mid p]$. Pure summation leaves have $V_i=0$; a rounded product leaf has $V_i=\E_\xi[(\xi_i^{(p)})^2\mid p_i]$. Every internal node satisfies
\begin{equation}
\label{eq:exact-recurrence}
V_v=V_{v_1}+V_{v_2}+\E_\xi[\psi_v(x_v)\mid p].
\end{equation}
\end{theorem}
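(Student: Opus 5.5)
The plan is to expand the square of the accumulated error at an internal node, take conditional expectations in two stages, and then induct upward from the leaves. At an internal node $v$ with children $v_1,v_2$ we have $\widehat q_v=\operatorname{fl}_v(x_v)=x_v+\xi_v$. Since $q_v=q_{v_1}+q_{v_2}$, this gives
\[
e_v=e_{v_1}+e_{v_2}+\xi_v .
\]
Squaring yields
\[
e_v^2=e_{v_1}^2+e_{v_2}^2+\xi_v^2+2e_{v_1}e_{v_2}+2(e_{v_1}+e_{v_2})\xi_v .
\]
Taking $\E_\xi[\cdot\mid p]$ term by term, the first two terms give $V_{v_1}+V_{v_2}$ by definition. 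The remaining task is to identify the other three terms.

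For the terms involving $\xi_v$, use the tower property through $\mathcal F_v^-$, which contains $\sigma(p)$. The inherited error $e_{v_1}+e_{v_2}=x_v-q_v$ is $\mathcal F_v^-$-measurable by Definition~\ref{def:model}, so it can be pulled out of the inner expectation:
\[
\E_\xi\!\left[(e_{v_1}+e_{v_2})\xi_v\mid p\right]
=\E_\xi\!\left[(e_{v_1}+e_{v_2})\,\E_\xi[\xi_v\mid\mathcal F_v^-]\mid p\right]=0 .
\]
In the same way,
\[
\E_\xi[\xi_v^2\mid p]=\E_\xi\!\left[\E_\xi[\xi_v^2\mid\mathcal F_v^-]\mid p\right]=\E_\xi[\psi_v(x_v)\mid p].
\]
For the child cross term, note that $e_{v_1}$ and $e_{v_2}$ are functions of $p$ and of the rounding draws in disjoint subtrees. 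These draws are conditionally independent given $p$, so
\[
\E_\xi[e_{v_1}e_{v_2}\mid p]=\E_\xi[e_{v_1}\mid p]\,\E_\xi[e_{v_2}\mid p].
\]
It then suffices to show that every node error has conditional mean zero. I would prove this as a separate lemma by induction on the height of the node. At a pure summation leaf, $e_i=0$. At a product leaf, $e_i=\xi_i^{(p)}$, which has conditional mean zero by the analogous leaf condition. At an internal node, $\E_\xi[e_v\mid p]=\E_\xi[e_{v_1}\mid p]+\E_\xi[e_{v_2}\mid p]+\E_\xi[\E_\xi[\xi_v\mid\mathcal F_v^-]\mid p]=0$.

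Assembling the pieces gives \eqref{eq:exact-recurrence} at every internal node. The leaf values $V_i$ come directly from the definitions: $V_i=0$ when $e_i=0$, and $V_i=\E_\xi[(\xi_i^{(p)})^2\mid p_i]$ when $e_i=\xi_i^{(p)}$. No further induction is needed for the recurrence itself, since it is a local identity; the induction is used only for the zero-mean lemma.

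The main obstacle is the child cross term. Independence of the subtrees alone does not kill it; we also need the zero conditional mean of the child errors, and hence the inductive lemma. A secondary technical point concerns integrability. I would assume, or note as implicit in the model, that $\E_\xi[e_v^2\mid p]<\infty$. This follows inductively if each $\psi_v(x_v)$ has finite conditional expectation, and it ensures that the product terms are integrable so the tower property applies. I would also make sure that product-leaf draws are included in $\mathcal F_v^-$ for ancestors $v$, so that the leaf errors are measurable where they are needed.
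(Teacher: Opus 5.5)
Your proposal is correct and follows the same route as the paper. Both split $e_v=e_{v_1}+e_{v_2}+\xi_v$ and condition on $\mathcal F_v^-$, which removes the $\xi_v$ cross term and turns $\xi_v^2$ into $\psi_v(x_v)$; both then remove $\E_\xi[e_{v_1}e_{v_2}\mid p]$ using conditional independence of disjoint subtrees together with zero child means. Your inductive zero-mean lemma and integrability remark just make explicit what the paper's short proof asserts without proof.
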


\begin{proof}
Because $e_v=e_{v_1}+e_{v_2}+\xi_v$, conditioning first on $\mathcal F_v^-$ makes the cross term with $\xi_v$ vanish. The child errors have zero conditional means and are independent on disjoint subtrees, so $\E_\xi[e_{v_1}e_{v_2}\mid p]=0$. The remaining three second moments give~\eqref{eq:exact-recurrence}.
\end{proof}

Standard stochastic rounding supplies a concrete $\psi_v$. If $x\in[x_-,x_+]$ lies between adjacent floating-point numbers, $h=x_+-x_-$ and $\theta=(x-x_-)/h$, then
\begin{equation}
\label{eq:sr-local}
\E_\xi[\xi\mid x]=0,
\qquad
\psi(x)=\theta(1-\theta)h^2.
\end{equation}
Thus Theorem~\ref{thm:main} is exact for standard stochastic rounding, including the operand-dependent interpolation coordinate $\theta_v$.

\subsection{Constant-\texorpdfstring{$\nu$}{nu} Model}
\label{sec:constant-nu}

We now specialize the exact recurrence by replacing operand-dependent local variances with a homogeneous model. Pure sums round only additions, whereas inner products may also round products. Subscripts $p$ and $a$ distinguish these operations: $u_p,u_a$ are their unit roundoffs and $\nu_p,\nu_a$ are their nonnegative, dimensionless second-moment coefficients. Because relative error scales with the operand, the homogeneous model uses variance proportional to $u^2x^2$:
\[
\psi_v(x)=\nu_a u_a^2x^2,
\qquad
V_i=\nu_pu_p^2p_i^2,
\]
with separately calibrated product and accumulation coefficients.

For any node $w$, let $d(w)$ be its depth, measured as the number of edges from the root. Substituting the homogeneous model into~\eqref{eq:exact-recurrence} introduces the one-level propagation factor
\[
\omega=1+\nu_au_a^2.
\]
This factor measures how child-level variance is amplified as it passes through a rounded parent. Unrolling the recurrence from the root assigns each local term at $w$ the geometric weight $\omega^{d(w)}$. Under $h(T)u_a^2\ll1$, these weights differ from unity by only $O(h(T)u_a^2)$. For binary64, $\omega-1=\nu_au_a^2\approx1.23\times10^{-32}\nu_a$, so the weights are approximately one under the stated condition.

\begin{corollary}[Constant-$\nu$ model recurrence and solution]
\label{cor:constant-nu}
With the notation above,
\begin{align}
V_v&=\omega(V_{v_1}+V_{v_2})+\nu_au_a^2q_v^2,\label{eq:recurrence}\\
V_{\rm root}
&=\nu_pu_p^2\sum_{i=1}^k\omega^{d(i)}p_i^2
+\nu_au_a^2\sum_{v\in\operatorname{int}(T)}\omega^{d(v)}q_v^2.
\label{eq:weighted-solution}
\end{align}
If $h(T)u_a^2\to0$ and $\nu_a$ remains uniformly bounded, then
\begin{equation}
\label{eq:leading-order}
V_{\rm root}
=\left(
\nu_pu_p^2\sum_{i=1}^k p_i^2
+\nu_au_a^2\sum_{v\in\operatorname{int}(T)}q_v^2
\right)
\left[1+O\!\left(h(T)u_a^2\right)\right].
\end{equation}
\end{corollary}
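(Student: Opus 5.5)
Starting from the exact recurrence of Theorem~\ref{thm:main}, I would substitute $\psi_v(x)=\nu_au_a^2x^2$ and expand $\E_\xi[x_v^2\mid p]$. Since $x_v=q_v+e_{v_1}+e_{v_2}$, the expansion has three kinds of cross terms, and each vanishes for a reason already available in the proof of Theorem~\ref{thm:main}. First, each child error has zero conditional mean given $p$. This follows by induction using the tower property with $\mathcal F_v^-$: product-leaf errors are unbiased, and the local errors $\xi_w$ are unbiased at every internal node. Second, errors in disjoint subtrees are independent given $p$, so $\E_\xi[e_{v_1}e_{v_2}\mid p]=0$. It follows that
\[
\E_\xi[x_v^2\mid p]=q_v^2+V_{v_1}+V_{v_2}.
\]
Substituting this into~\eqref{eq:exact-recurrence} gives $V_v=(1+\nu_au_a^2)(V_{v_1}+V_{v_2})+\nu_au_a^2q_v^2$, which is~\eqref{eq:recurrence}. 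The leaf conditions are $V_i=\nu_pu_p^2p_i^2$ for product leaves; for pure sums, setting $\nu_p=0$ recovers $V_i=0$.

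To obtain~\eqref{eq:weighted-solution}, I would prove by structural induction the more general statement that, for any node $v$ at depth $d(v)$,
\[
V_v=\sum_{w\preceq v}\omega^{d(w)-d(v)}c_w,
\]
where $w$ ranges over all nodes in the subtree rooted at $v$. Here $c_w=\nu_pu_p^2p_i^2$ when $w$ is the leaf $i$, and $c_w=\nu_au_a^2q_w^2$ when $w$ is internal. The leaf case is immediate. In the inductive step, applying~\eqref{eq:recurrence} multiplies each child term by $\omega$, which raises its relative depth by exactly one, and then adds the local term at $v$ with weight $\omega^0$. Evaluating at the root, where $d=0$, gives~\eqref{eq:weighted-solution}.

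For the asymptotic statement~\eqref{eq:leading-order}, every exponent satisfies $d(w)\le h(T)$, so $1\le\omega^{d(w)}\le\omega^{h(T)}$. Since all $c_w\ge0$, the sum $V_{\rm root}$ lies between the unweighted sum $S$ and $\omega^{h(T)}S$. The remaining task, and the only step needing care, is to bound $\omega^{h(T)}-1$. I would use
\[
\omega^{h}=\exp\bigl(h\log(1+\nu_au_a^2)\bigr)\le\exp(h\nu_au_a^2).
\]
Because $\nu_a$ is uniformly bounded and $h(T)u_a^2\to0$, the exponent tends to zero, and $e^t-1\le 2t$ for small $t$ then gives $\omega^{h}-1=O(h(T)u_a^2)$. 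This is a uniform multiplicative factor $1+O(h(T)u_a^2)$. Nonnegativity of every term is what allows one relative bound to cover the whole sum, with no cancellation to track.
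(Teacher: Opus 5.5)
Your proposal is correct and follows the paper's proof. You derive \eqref{eq:recurrence} from $\E_\xi[x_v^2\mid p]=q_v^2+V_{v_1}+V_{v_2}$, unroll it into the depth weights $\omega^{d(w)}$, and combine $0\le d(w)\le h(T)$ with nonnegativity of every term to get \eqref{eq:leading-order}; you simply spell out steps the paper leaves implicit, namely the inductive zero-mean argument, the structural induction, and the bound $\omega^{h}-1\le e^{h\nu_au_a^2}-1=O(h(T)u_a^2)$.
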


\begin{proof}
Equation~\eqref{eq:recurrence} follows from
$\E_\xi[x_v^2\mid p]=q_v^2+V_{v_1}+V_{v_2}$. Expanding from the root assigns each local term the weight $\omega^{d(w)}$. Since $0\leq d(w)\leq h(T)$, this weight is $1+O(h(T)u_a^2)$ uniformly. Nonnegativity then gives~\eqref{eq:leading-order}.
\end{proof}

Theorem~\ref{thm:main} gives the exact stochastic-rounding recurrence. Corollary~\ref{cor:constant-nu} supplies the constant-$\nu$ model that Section~\ref{sec:experiments} evaluates as a round-to-nearest MSE approximation over random inputs.

\subsection{Common-Ancestor Kernel}
\label{sec:ancestor-kernel}

The quadratic cost $\sum_vq_v^2$ can be expressed as a quadratic form in the input vector. This representation defines the common-ancestor kernel. Specifically, the coefficient of $p_ip_j$ counts the internal nodes containing both leaves: their common internal ancestors. Figure~\ref{fig:tree-statistics} illustrates the resulting structure.

\begin{proposition}[Common-ancestor kernel]
\label{prop:ancestor-kernel}
Let $b_v\in\{0,1\}^k$ indicate the leaves below $v$, and define
\[
K_T=\sum_{v\in\operatorname{int}(T)}b_vb_v^\top,
\qquad
(K_T)_{ij}=\#\{v:i,j\in\mathcal L(v)\}.
\]
Then
\begin{equation}
\label{eq:ancestor-quadratic}
\sum_{v\in\operatorname{int}(T)}q_v^2=p^\top K_Tp.
\end{equation}
For the input second-moment matrix $M=\E_p[pp^\top]$,
\begin{equation}
\label{eq:ancestor-expect}
\E_p\!\left[\sum_vq_v^2\right]
=\operatorname{tr}(K_TM)=\langle K_T,M\rangle.
\end{equation}
\end{proposition}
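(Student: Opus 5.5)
The plan is to write each exact partial sum as a linear functional of the input vector and then sum the squares. By definition of the leaf indicator, $q_v=\sum_{i\in\mathcal L(v)}p_i=b_v^\top p$ for every internal node $v$. Hence $q_v^2=(b_v^\top p)^2=p^\top b_vb_v^\top p$. Summing over $v\in\operatorname{int}(T)$ and using linearity of the quadratic form in its matrix argument gives
\[
\sum_{v\in\operatorname{int}(T)}q_v^2=p^\top\Bigl(\sum_{v\in\operatorname{int}(T)}b_vb_v^\top\Bigr)p=p^\top K_Tp,
\]
which is \eqref{eq:ancestor-quadratic}.

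Next we verify the entrywise description of $K_T$. The $(i,j)$ entry of $b_vb_v^\top$ is $(b_v)_i(b_v)_j$. This product equals $1$ exactly when both $i$ and $j$ lie in $\mathcal L(v)$, and equals $0$ otherwise. Summing over internal nodes therefore counts the internal nodes whose leaf sets contain both $i$ and $j$. These are precisely the common internal ancestors of $i$ and $j$, since $i\in\mathcal L(v)$ means $v$ is an ancestor of leaf $i$. In particular $(K_T)_{ii}$ is the internal depth of leaf $i$.

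For \eqref{eq:ancestor-expect}, we use the trace identity on the scalar $p^\top K_Tp$:
\[
p^\top K_Tp=\operatorname{tr}(p^\top K_Tp)=\operatorname{tr}(K_Tpp^\top).
\]
Taking $\E_p$ and exchanging expectation with the (finite, linear) trace gives $\operatorname{tr}(K_T\E_p[pp^\top])=\operatorname{tr}(K_TM)$. Because $K_T$ is a sum of symmetric rank-one matrices, it is symmetric. Therefore $\operatorname{tr}(K_TM)=\operatorname{tr}(K_T^\top M)=\langle K_T,M\rangle$.

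There is no real obstacle here. The only point needing care is integrability: $M$ must exist, i.e.\ $\E_p[p_i^2]<\infty$ for all $i$, so that the expectation can pass through the finite sum. We would state this as an implicit assumption whenever $M$ is used.
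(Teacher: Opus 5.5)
Your proposal is correct and follows the same route as the paper's proof: write $q_v=b_v^\top p$, sum the rank-one quadratic forms $p^\top b_vb_v^\top p$ to obtain $p^\top K_Tp$, and then take $\E_p$ using the trace identity. The paper leaves implicit the details you add: the entrywise common-ancestor count, the symmetry of $K_T$ that identifies $\operatorname{tr}(K_TM)$ with $\langle K_T,M\rangle$, and the finite-second-moment assumption.
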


\begin{proof}
Since $q_v=b_v^\top p$, summing $q_v^2=p^\top b_vb_v^\top p$ proves the first identity. Taking $\E_p$ and using the trace inner product proves the second.
\end{proof}

The kernel $K_T$ is a leaf-indexed specialization of graph-theoretic ancestral matrices \cite{AndriantianaEtAl2019Ancestral}: it counts internal common ancestors. Because a single traversal accumulates $\sum_vq_v^2$ and the scalar statistics below, $K_T$ need not be formed densely.

\subsection{I.i.d. Compression}
\label{sec:iid-compression}

For i.i.d. inputs, the kernel contraction simplifies to two scalar tree statistics because $M$ has only an identity and a rank-one component.

\begin{corollary}[I.i.d. tree statistics]
\label{cor:iid}
If $\E_p[p_i]=\mu$ and $\operatorname{Var}_p(p_i)=\tau^2$, define
\[
\Lambda_1(T)=\sum_{v\in\operatorname{int}(T)}|\mathcal L(v)|,
\qquad
\Lambda_2(T)=\sum_{v\in\operatorname{int}(T)}|\mathcal L(v)|^2.
\]
Then
\begin{equation}
\label{eq:iid-cost}
\langle K_T,M\rangle
=\tau^2\Lambda_1(T)+\mu^2\Lambda_2(T).
\end{equation}
Including rounded product leaves, the leading second moment under the constant-$\nu$ model is
\[
\nu_pu_p^2k(\tau^2+\mu^2)
+\nu_au_a^2\{\tau^2\Lambda_1+\mu^2\Lambda_2\}.
\]
\end{corollary}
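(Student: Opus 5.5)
The plan is to compute the second-moment matrix explicitly and then contract it against $K_T$ entry by entry, using Proposition~\ref{prop:ancestor-kernel}. For i.i.d. inputs with mean $\mu$ and variance $\tau^2$ we have $\E_p[p_ip_j]=\mu^2$ for $i\neq j$ and $\E_p[p_i^2]=\tau^2+\mu^2$. Hence
\[
M=\tau^2 I+\mu^2\mathbf 1\mathbf 1^\top .
\]
In fact only pairwise uncorrelatedness is needed, not full independence.

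By linearity of the Frobenius inner product,
\[
\langle K_T,M\rangle=\tau^2\operatorname{tr}K_T+\mu^2\,\mathbf 1^\top K_T\mathbf 1 .
\]
We evaluate both terms from the rank-one decomposition $K_T=\sum_{v\in\operatorname{int}(T)}b_vb_v^\top$:
\begin{itemize}
\item $\operatorname{tr}(b_vb_v^\top)=b_v^\top b_v=|\mathcal L(v)|$, since $b_v$ is a $0/1$ indicator vector. Summing over $v$ gives $\operatorname{tr}K_T=\Lambda_1(T)$.
\item $\mathbf 1^\top b_vb_v^\top\mathbf 1=(\mathbf 1^\top b_v)^2=|\mathcal L(v)|^2$. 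Summing gives $\mathbf 1^\top K_T\mathbf 1=\Lambda_2(T)$.
\end{itemize}
This proves \eqref{eq:iid-cost}. As a remark, not needed for the proof, $\Lambda_1$ equals total leaf depth: exchanging the order of summation, each leaf is counted once for each of its internal ancestors.

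For the second-moment statement, take $\E_p$ of the leading-order expression \eqref{eq:leading-order} in Corollary~\ref{cor:constant-nu}.
\begin{itemize}
\item The product term contributes $\nu_pu_p^2\sum_i\E_p[p_i^2]=\nu_pu_p^2k(\tau^2+\mu^2)$.
\item The accumulation term contributes $\nu_au_a^2\,\E_p[\sum_vq_v^2]=\nu_au_a^2\langle K_T,M\rangle$ by \eqref{eq:ancestor-expect}. This equals $\nu_au_a^2\{\tau^2\Lambda_1+\mu^2\Lambda_2\}$ by \eqref{eq:iid-cost}.
\end{itemize}

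The only delicate point is passing $\E_p$ through the relative error factor $[1+O(h(T)u_a^2)]$. The $O$-bound in Corollary~\ref{cor:constant-nu} is uniform in $p$: the weights $\omega^{d(w)}$ lie in $[1,\omega^{h(T)}]$ regardless of the input. Both summands are nonnegative, so the exact $V_{\rm root}$ is sandwiched between the leading expression and $\omega^{h(T)}$ times it. Taking expectations preserves this sandwich, provided the inputs have finite second moments, which the definition of $M$ already presupposes. Therefore "leading" has the same meaning after averaging over inputs as before.
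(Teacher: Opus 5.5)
Your proof is correct and takes essentially the paper's route: write $M=\tau^2I+\mu^2\mathbf 1\mathbf 1^\top$, so that $\langle K_T,M\rangle=\tau^2\operatorname{tr}K_T+\mu^2\mathbf 1^\top K_T\mathbf 1$ with $\operatorname{tr}K_T=\Lambda_1$ and $\mathbf 1^\top K_T\mathbf 1=\Lambda_2$, and then average the leading-order expression of Corollary~\ref{cor:constant-nu}. The paper leaves implicit how $\E_p$ passes through the $[1+O(h(T)u_a^2)]$ factor; your argument supplies it soundly, by bounding $V_{\rm root}$ below by the leading expression and above by $\omega^{h(T)}$ times it, with the bound uniform in $p$.
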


Equivalently, $\Lambda_1=\operatorname{tr}K_T$ and $\Lambda_2=\mathbf 1^\top K_T\mathbf 1$ are the diagonal and all-entry sums displayed in Figure~\ref{fig:tree-statistics}. Figure~\ref{fig:two-invariants} shows where the variance and squared-mean terms exchange dominance for three tree families.

\begin{figure}[t]
\centering
\includegraphics[width=\columnwidth]{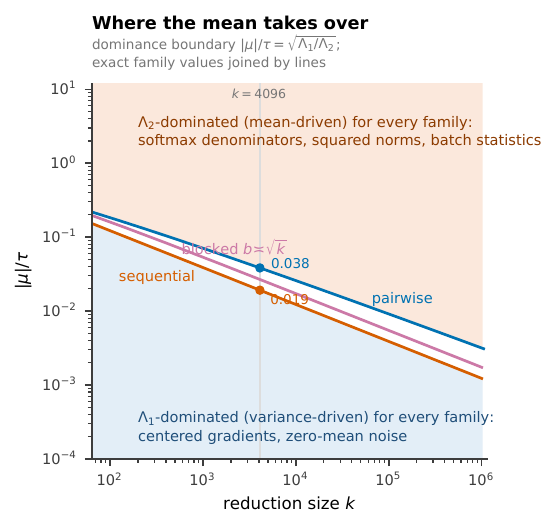}
\caption{Dominance boundary for i.i.d.\ pure summation. Each curve connects exact values of $|\mu|/\tau=\sqrt{\Lambda_1/\Lambda_2}$: integer sizes for sequential trees, powers of two for pairwise trees, and square sizes $k=b^2$ for two-stage blocked trees. Above a curve, the $\mu^2\Lambda_2$ term exceeds $\tau^2\Lambda_1$ for that family; below it, the reverse holds. The marked $k=4096$ thresholds are $0.019$ (sequential) and $0.038$ (pairwise). The workload labels illustrate typical regimes without imposing a distribution. Section~\ref{sec:iid-compression} derives the two terms, and Section~\ref{sec:prediction-workflow} states the criterion.}
\label{fig:two-invariants}
\end{figure}

For a two-stage blocked tree with sequential accumulation within blocks and across block totals, the formulas below apply to the divisible case $n_b=k/b$. For nondivisible cases, we evaluate $\Lambda_1$ and $\Lambda_2$ on the exact tree, including its shorter final block.

\begin{center}
\scriptsize
\ra{1.15}
\resizebox{\columnwidth}{!}{%
\begin{tabular}{@{}lll@{}}
\toprule
Tree & $\Lambda_1(T)$ & $\Lambda_2(T)$ \\
\midrule
Sequential & $k(k+1)/2-1$ & $k(k+1)(2k+1)/6-1$ \\
Pairwise, $k=2^j$ & $k\log_2k$ & $2k^2-2k$ \\
Two-stage blocked & $k^2/(2b)+kb/2+O(k)$
& $k^3/(3b)+kb^2/3+O(k^2+kb)$ \\
\bottomrule
\end{tabular}
}
\end{center}

The statistics have different meanings: $\Lambda_1$ is total leaf depth, while $\Lambda_2$ weights large subtrees quadratically. Their optimization therefore produces different schedules for centered and noncentered inputs.

\section{Tree Statistics and Optimization}
\label{sec:exponents}

We optimize $\Lambda_1$ and $\Lambda_2$ over several restricted families of reduction trees to identify structures that minimize the predicted RMS error in centered and noncentered regimes and connect these choices with classical blocking and scheduling results.

For centered i.i.d. inputs, the leading RMS scale is proportional to $\sqrt{\Lambda_1(T)}$. To formalize this scale asymptotically, let $\mathcal K\subseteq\mathbb N$ be an unbounded set of input sizes, and let $T_\bullet=(T_k)_{k\in\mathcal K}$ be a family with one $k$-leaf tree for each $k\in\mathcal K$. When the limit exists, we define the exponent $\alpha(T_\bullet)$ by
\begin{equation}
\label{eq:exponent-definition}
\alpha(T_\bullet)=\frac12\lim_{\substack{k\to\infty\\k\in\mathcal K}}
\frac{\log\Lambda_1(T_k)}{\log k}.
\end{equation}
The factor $1/2$ converts second-moment growth into an RMS exponent. This asymptotic exponent $\alpha$ differs from $\widehat\alpha$, a log--log slope fitted to a tree-statistic scale or measured RMS over a specified finite grid. Logarithmic and finite-size drift can separate them; pairwise RMS, for example, has exponent $1/2$ but includes a $\sqrt{\log k}$ factor. Interpreting the combinatorial prediction arithmetically requires $k\to\infty$, $u\to0$, and $h(T_k)u^2\to0$, with the local coefficient bounded above and below. In the subsections below, we derive exponents $1/2$, $2/3$, $3/4$, and $1$ for pairwise, three-stage, two-stage, and sequential families, respectively.

\subsection{Extremal Trees}
\label{sec:extremal-trees}

We first identify the trees that minimize and maximize $\Lambda_1$. A counting argument gives an alternative formula. Each internal node $v$ contributes its leaf count $|\mathcal L(v)|$ to $\Lambda_1$. Equivalently, each leaf $i$ is counted once for each of its $d_i$ internal ancestors. Summing over leaves gives
\begin{equation}
\label{eq:lambda1-depth}
\Lambda_1(T)=\sum_{i=1}^k d_i.
\end{equation}
The right-hand side is the tree's \emph{external path length}, the sum of all root-to-leaf path lengths. This formulation reveals the extremal structure: nearly balanced trees minimize the external path length, whereas the sequential tree maximizes it. Because the centered RMS scale is proportional to $\sqrt{\Lambda_1}$, the same ordering applies to the predicted error.

\begin{proposition}[$\Lambda_1$ extrema]
\label{prop:extremal}
Among full binary trees with $k$ leaves,
\begin{align*}
\Lambda_1^{\min}
&=k\lfloor\log_2k\rfloor
+2\{k-2^{\lfloor\log_2k\rfloor}\},\\
\Lambda_1^{\max}&=\frac{k(k+1)}2-1.
\end{align*}
Nearly complete balanced trees attain the minimum; the sequential tree attains the maximum. Consequently, every family with a well-defined exponent has $\alpha\in[1/2,1]$.
\end{proposition}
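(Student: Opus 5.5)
Both extrema follow from the external-path-length identity \eqref{eq:lambda1-depth}, $\Lambda_1(T)=\sum_i d_i$, so we work with leaf depths throughout.

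\emph{Maximum.} Induct on $k$ via the root split. If the root's subtrees have $a$ and $k-a$ leaves ($1\le a\le k-1$), every leaf gains one depth, so
\[
\Lambda_1(T)=k+\Lambda_1(T_1)+\Lambda_1(T_2).
\]
Write $f(m)=m(m+1)/2-1$ and note $f(1)=0$. By the inductive hypothesis it suffices to show $k+f(a)+f(k-a)\le f(k)$. The difference $f(k)-k-f(a)-f(k-a)$ simplifies to $a(k-a)-k+1=(a-1)(k-a-1)\ge0$. Equality holds when $a=1$ or $a=k-1$, which, iterated, produces the sequential (caterpillar) tree. Alternatively, one can argue that the depths of a full binary tree satisfy $d_{(j)}\le j$ after sorting, with at most two leaves at the deepest level; the induction is cleaner.

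\emph{Minimum.} Use the classical exchange argument for external path length. Suppose some leaf $\ell$ has depth $d_\ell\le D-2$, where $D$ is the maximum depth. Take two sibling leaves at depth $D$ (a deepest internal node has two leaf children). Delete them, which makes their parent a leaf at depth $D-1$, and attach two new children to $\ell$. The leaf count is unchanged, and the sum changes by
\[
-2D+(D-1)-d_\ell+2(d_\ell+1)=-D+d_\ell+1<0.
\]
Hence an optimal tree has all leaf depths in $\{D-1,D\}$. With $n_D$ leaves at depth $D$ and $n_{D-1}$ at depth $D-1$, the Kraft equality for full binary trees, $\sum_i 2^{-d_i}=1$, together with $n_D+n_{D-1}=k$, gives $n_{D-1}=2^D-k$ and $n_D=2k-2^D$. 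Here $n_D\ge 2$, so $D-1=\lfloor\log_2 k\rfloor=:m$, and this convention also handles the case where $k$ is a power of two. Then
\[
\Lambda_1=k\,m+n_D=km+2(k-2^m),
\]
which matches the stated formula. Such a depth profile is realized by a nearly complete tree: take the complete tree of depth $m$ and split $k-2^m$ of its leaves.

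\emph{Exponent range.} Since $\Lambda_1^{\max}\sim k^2/2$, and $\Lambda_1^{\min}=\Theta(k\log k)$, meaning $\log\Lambda_1^{\min}/\log k\to1$, any family satisfies
\[
1\le \liminf\frac{\log\Lambda_1}{\log k}\le\limsup\frac{\log\Lambda_1}{\log k}\le 2.
\]
Halving these bounds gives $\alpha\in[1/2,1]$.

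The main obstacle is the minimum. There we must justify the Kraft equality $\sum_i 2^{-d_i}=1$ (by induction on the root split) and the existence of a deepest sibling pair, and we must check the boundary case of $k$ a power of two.
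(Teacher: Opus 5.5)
Your proof follows the same route as the paper. The paper justifies the proposition only through the external-path-length identity \eqref{eq:lambda1-depth} and an appeal to the classical extremal facts. You prove those facts: root-split induction for the maximum, and the leaf-exchange argument with Kraft's equality for the minimum. You then use the same sandwich argument for $\alpha\in[1/2,1]$. The maximum computation, the exchange step, the Kraft counts $n_{D-1}=2^D-k$ and $n_D=2k-2^D$, and the realization by a nearly complete tree are all correct.

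One step is misstated, in exactly the boundary case you flag. From $n_D\ge 2$ and $n_{D-1}\ge 0$ you get $2^{D-1}<k\le 2^D$, i.e.\ $D=\lceil\log_2 k\rceil$. So $D-1=\lfloor\log_2 k\rfloor$ holds only when $k$ is not a power of two; for $k=2^j$ you get $D=j$ and $D-1=j-1$. The final value is still right, for two reasons. First, $\Lambda_1=k(D-1)+2(k-2^{D-1})$ holds in every case. Second, $g(m):=km+2(k-2^m)$ satisfies $g(j-1)=g(j)=kj$ when $k=2^j$. State this instead of asserting $D-1=\lfloor\log_2 k\rfloor$.

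Separately, delete the parenthetical alternative for the maximum. The bound $d_{(j)}\le j$ is false in general: the complete tree on four leaves has every depth equal to $2$. Also, a general full binary tree can have more than two leaves at its deepest level. The induction you actually use does not need either claim.
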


Every intermediate exponent is also attainable. To attain a target $\alpha\in(1/2,1]$, combine pairwise blocks of size $g=k^{2-2\alpha}$ sequentially. The outer stage contributes $\Theta(k^{2\alpha})$ to $\Lambda_1$, while the inner stages contribute $O(k\log g)$. The balanced family attains the lower extremum.

\subsection{Heterogeneous Variances}

When independent centered inputs have heterogeneous variances, the optimal tree minimizes a variance-weighted external path length. Specifically, Equation~\eqref{eq:ancestor-expect} becomes $\sum_i\tau_i^2d_i$. This objective is mathematically identical to the expected-codeword-length objective for a binary prefix code in which symbol $i$ has probability proportional to $\tau_i^2$ and codeword length $d_i$. The equivalence lets us import the classical Huffman optimality result directly.

\begin{proposition}[Huffman optimum and entropy bound]
\label{prop:huffman}
Assume independent centered inputs, an unconstrained full binary tree, and free leaf assignment. In the leading-order constant-$\nu$ regime, where $\omega^{d_i}=1+o(1)$ uniformly in $i$, Huffman coding with weights $\tau_i^2$ minimizes $\sum_i\tau_i^2d_i$ \cite{Huffman1952MinimumRedundancy,CoverThomas2006ElementsInformation}. If $W=\sum_i\tau_i^2$, $w_i=\tau_i^2/W$, and $H_2(w)$ is binary entropy, then
\[
WH_2(w)\leq\min_T\sum_i\tau_i^2d_i<W\{H_2(w)+1\}.
\]
\end{proposition}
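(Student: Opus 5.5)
The plan is to reduce the tree-design problem to optimal binary prefix coding and then invoke the classical Huffman and Shannon results. \textbf{Reduction.} For independent centered inputs, $M=\operatorname{diag}(\tau_1^2,\ldots,\tau_k^2)$. Hence Proposition~\ref{prop:ancestor-kernel} gives $\langle K_T,M\rangle=\sum_i\tau_i^2(K_T)_{ii}$. Since $(K_T)_{ii}$ counts the internal ancestors of leaf $i$, it equals the depth $d_i$, as in the derivation of~\eqref{eq:lambda1-depth}. By Corollary~\ref{cor:constant-nu}, in the regime $\omega^{d_i}=1+o(1)$ the only tree-dependent leading term is $\nu_au_a^2\sum_i\tau_i^2d_i$. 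The product-leaf term is $\nu_pu_p^2\sum_i\tau_i^2$, which does not depend on $T$. So minimizing the leading second moment is the same as minimizing $\sum_i\tau_i^2d_i$ over full binary trees with $k$ leaves and all assignments of inputs to leaves.

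\textbf{Equivalence with prefix codes.} A full binary tree with labeled leaves corresponds to a binary prefix code: read the left/right edges along the root-to-leaf path to get the codeword of symbol $i$, whose length is $d_i$. Conversely, any prefix code can be drawn as a binary tree. If that tree is not full, contract every internal node with a single child; this strictly decreases lengths, so an optimal code is always full. The objective is $W\sum_iw_id_i$, which is $W$ times the expected codeword length under the distribution $w$. Huffman's theorem~\cite{Huffman1952MinimumRedundancy} then says that repeatedly merging the two smallest weights yields a minimizer. The scaling by $W$ does not change the minimizer, so the raw weights $\tau_i^2$ can be used. If some $\tau_i=0$, that symbol has zero weight. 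Optimality still holds in that case, or one can argue by continuity.

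\textbf{Entropy bounds.} Every full binary tree satisfies Kraft's equality $\sum_i2^{-d_i}=1$. By Gibbs' inequality, $\sum_iw_id_i\ge H_2(w)$, which gives the lower bound after multiplying by $W$. For the upper bound, take Shannon lengths $\ell_i=\lceil-\log_2w_i\rceil$ for $w_i>0$. These satisfy $\sum2^{-\ell_i}\le1$, so a prefix code with these lengths exists, and its expected length is $<H_2(w)+1$. Pruning that code to a full tree only shortens lengths, and the Huffman optimum is no worse, so the strict inequality transfers. Zero-weight symbols need a small perturbation argument: give them long codewords, which cost nothing in the objective but must still respect Kraft's inequality. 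This keeps the bound strict.

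\textbf{Main obstacle.} The delicate step is justifying that the leading-order objective really is $\sum_i\tau_i^2d_i$ uniformly over all trees. Depths can reach $k-1$, so the condition $\omega^{d_i}=1+o(1)$ must be taken as the stated assumption, not derived. It ensures the $[1+O(h u_a^2)]$ factor from~\eqref{eq:leading-order} does not reorder trees at leading order. I would state the optimality as a statement about the leading term. The combinatorial and information-theoretic parts are standard citations.
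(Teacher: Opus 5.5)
Your route is the paper's own. The paper gives no separate proof: it observes that for independent centered inputs $M=\operatorname{diag}(\tau_1^2,\ldots,\tau_k^2)$, so $\langle K_T,M\rangle$ becomes the weighted external path length $\sum_i\tau_i^2d_i$, which is $W$ times an expected codeword length. It then cites Huffman and Cover--Thomas. Your Kraft--Gibbs lower bound and your Shannon-code-plus-pruning upper bound write out what those citations supply. When every $\tau_i^2>0$, your argument is complete. Your caveat that optimality concerns only the leading term is also the right reading of the hypothesis $\omega^{d_i}=1+o(1)$.

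The one step that fails is your zero-variance patch for the strict upper bound. Zero-weight leaves cost nothing only if the Shannon code on the support of $w$ leaves Kraft slack. When $w$ is dyadic on its support, $\sum_i2^{-\ell_i}=1$, so there is no free node, and some positive-weight leaf must be pushed deeper.

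In the extreme case the strict inequality is simply false. Suppose $k\ge2$ and exactly one $\tau_j^2$ is nonzero. Then $H_2(w)=0$, and every leaf of a full binary tree with $k\ge2$ leaves has depth at least one. Making leaf $j$ a child of the root gives $\min_T\sum_i\tau_i^2d_i=W=W\{H_2(w)+1\}$; this already happens for $k=2$, $\tau=(1,0)$. No perturbation or continuity argument can rescue a strict inequality there.

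You should either state the hypothesis $\tau_i>0$ for all $i$, which is implicit in the paper and in the cited coding theorem, or handle zeros correctly. With at least two positive weights the repair works as follows. In the dyadic case the Shannon code has expected length exactly $H_2(w)$. Deepening the leaf of smallest positive weight, which is at most $1/2$, gives cost at most $H_2(w)+1/2<H_2(w)+1$. In the non-dyadic case the Kraft slack hosts the zero-weight leaves at no cost.
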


Kao and Wang optimize a deterministic magnitude-weighted objective. For same-sign data, it reduces to weighted external path length with weights $|p_i|$; for mixed signs, it depends on signed partial sums \cite{KaoWang2000Summation}. Proposition~\ref{prop:huffman} instead optimizes the variance-weighted expected squared-partial-sum objective $\sum_i\tau_i^2d_i$ for independent centered inputs. With a maximum-depth constraint, the package-merge algorithm---a length-limited Huffman construction---gives the optimal tree \cite{LarmoreHirschberg1990LengthLimited}.

\subsection{Two-Stage Sequential Blocking}
\label{sec:two-stage-blocking}

In two-stage sequential blocking, each block and the block totals are summed sequentially. Here and below, $f(k)\asymp g(k)$ means that $c g(k)\leq f(k)\leq C g(k)$ for positive constants $c,C$ independent of $k$. With $b=k^\beta$,
\[
\Lambda_1\sim\frac{k^2}{2b}+\frac{kb}{2}
\asymp k^{2-\beta}+k^{1+\beta}.
\]
The first power decreases with $\beta$ and the second increases, so their maximum is V-shaped. Balancing the two terms yields the centered optimum.

\begin{proposition}[Centered blocked exponent]
\label{prop:blocked-exponent}
For $0\leq\beta\leq1$,
\[
\alpha_\tau(\beta)=\frac12\max(2-\beta,1+\beta).
\]
The two branches meet at $\beta=1/2$, where $b\asymp\sqrt{k}$ and $\alpha_\tau=3/4$.
\end{proposition}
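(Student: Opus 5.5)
The plan is to compute $\Lambda_1$ exactly for the two-stage sequential blocked tree, extract its leading order, and then read off the exponent from the definition~\eqref{eq:exponent-definition}. The tree consists of $n_b=\lceil k/b\rceil$ sequential block chains; the block totals are then accumulated by an outer sequential chain. By~\eqref{eq:lambda1-depth}, $\Lambda_1=\sum_i d_i$. Alternatively, since every internal node is either inside a block or in the outer chain, $\Lambda_1$ splits as an inner sum plus an outer sum over internal-subtree sizes.

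\textbf{Step 1: the two contributions.} In the divisible case, each block is a sequential tree on $b$ leaves. Its internal nodes have sizes $2,3,\ldots,b$, so it contributes $b(b+1)/2-1$, and the blocks together contribute $n_b\{b(b+1)/2-1\}=kb/2+O(k)$. The outer chain has internal nodes whose subtrees contain $2,3,\ldots,n_b$ whole blocks, so it contributes
\[
b\{n_b(n_b+1)/2-1\}=\frac{k^2}{2b}+O(k).
\]
This recovers the table entry $\Lambda_1=k^2/(2b)+kb/2+O(k)$. For nondivisible $k$, a shorter final block changes each sum by at most $O(k+b)=O(k)$. The remainder can therefore be absorbed into the $O(k)$ term, and it is harmless because both main terms are at least of order $k$ (note that $k^2/b+kb\ge 2k^{3/2}$).

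\textbf{Step 2: exponent.} Set $b=\lfloor k^\beta\rfloor$, clamped to $[1,k]$. Then
\[
\Lambda_1\asymp k^{2-\beta}+k^{1+\beta}\asymp k^{\max(2-\beta,1+\beta)}.
\]
At the endpoints one of the terms degenerates, and the $O(k)$ remainder sits below $k^{\max(\cdot)}\ge k^{3/2}$. Taking $\log\Lambda_1/\log k$ gives $\max(2-\beta,1+\beta)+O(1/\log k)$, since the $\asymp$ constants contribute only $O(1)$ to the numerator. The factor $\tfrac12$ in~\eqref{eq:exponent-definition} then yields $\alpha_\tau(\beta)=\tfrac12\max(2-\beta,1+\beta)$.

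\textbf{Step 3: the minimum.} The two branches are linear, one decreasing and one increasing, and they are equal exactly when $2-\beta=1+\beta$, that is, at $\beta=1/2$. The common value there is $3/2$, so $\alpha_\tau=3/4$ with $b\asymp\sqrt k$.

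The only real care point is uniformity at the endpoints $\beta\in\{0,1\}$ and the integer rounding of $b$. There I would check directly that the $O(k)$ remainder and the $\lfloor\cdot\rfloor$ perturbation change $\Lambda_1$ by a bounded factor. If so, the limit of $\log\Lambda_1/\log k$ is unaffected.
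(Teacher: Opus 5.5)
Your argument is correct and is essentially the paper's: it also takes $\Lambda_1\sim k^2/(2b)+kb/2\asymp k^{2-\beta}+k^{1+\beta}$ for $b=k^\beta$ and balances the decreasing and increasing branches at $\beta=1/2$. One small fix: with a short final block of size $r$, the inner sum is $kb/2-r(b-r)/2+O(k)$, and $r(b-r)/2$ can reach order $b^2\gg k$ when $\beta>1/2$, so your ``$O(k)$'' claim fails there; since $r(b-r)/2\le b^2/8\le kb/8$, however, $\Lambda_1$ stays within a constant factor of $k^2/b+kb$, which is all Step~2 uses.
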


Under the expected-RMS objective, Proposition~\ref{prop:blocked-exponent} recovers the balance $b\asymp\sqrt{k}$ found in classical worst-case analyses of blocked summation \cite{Higham1993AccuracySummation,FABsumBlanchardHighamMary2020} and in the two-stage member of the Superblock family \cite{CastaldoWhaleyChronopoulos2009Superblock}. This balance yields the exponent $\alpha_\tau=3/4$ for sequential accumulation within blocks and across their totals.

For noncentered inputs, the mean contribution instead uses
\[
\Lambda_2\sim\frac{k^3}{3b}+\frac{kb^2}{3},
\qquad
\alpha_\mu(\beta)=\frac12\max(3-\beta,1+2\beta).
\]
The exponent $\alpha_\mu(\beta)$ is minimized at $\beta=2/3$, where it equals $7/6$. The subscripts $\tau$ and $\mu$ denote the input-variance and input-mean contributions, respectively. Thus a single blocked construction requires different block growth rates in the $\Lambda_1$- and $\Lambda_2$-dominated regimes.

\subsection{Fixed-Stage Hierarchies}
\label{sec:fixed-stage-hierarchies}

We extend the optimization to fixed-stage hierarchies with $L$ sequential reduction levels and $L-1$ nested blocking levels. Let $r_\ell$ be the stage ratios, $\prod_{\ell=1}^Lr_\ell=k$, and let $s_\ell=\prod_{j\leq\ell}r_j$. Summing the stage contributions yields
\begin{equation}
\label{eq:multilevel-lambda}
\Lambda_1\sim\frac{k}{2}\sum_{\ell=1}^Lr_\ell,
\qquad
\Lambda_2\sim\frac{k}{3}\sum_{\ell=1}^Ls_\ell r_\ell.
\end{equation}

For centered inputs, the arithmetic--geometric mean inequality shows that $\Lambda_1$ is minimized when the stage ratios are equal, $r_\ell=k^{1/L}$. This choice yields
\begin{equation}
\label{eq:alpha-tau-L}
\Lambda_1^\star\sim\frac L2k^{1+1/L},
\qquad
\alpha_\tau(L)=\frac12\left(1+\frac1L\right).
\end{equation}
Optimizing $\Lambda_1$ independently recovers the centered geometric schedule selected by the Superblock optimizer \cite{CastaldoWhaleyChronopoulos2009Superblock}. Superblock controls worst-case exposure, whereas the schedule defined by~\eqref{eq:alpha-tau-L} minimizes the expected partial-sum-square statistic. The fixed-$L$ limit is distinct from the pairwise regime $L\asymp\log k$.

For noncentered inputs, however, the cumulative factors $s_\ell$ in~\eqref{eq:multilevel-lambda} make equal stage ratios suboptimal. We seek exponents $a_1,\ldots,a_L\geq0$ such that
\[
r_\ell=k^{a_\ell},
\qquad
\sum_{\ell=1}^L a_\ell=1.
\]
Set $A_0=0$ and $A_\ell=\sum_{j=1}^\ell a_j$. The $\ell$th term $k s_\ell r_\ell$ in~\eqref{eq:multilevel-lambda} scales as
\[
k^{E_\ell},
\qquad
E_\ell=1+A_{\ell-1}+2a_\ell.
\]
Thus the polynomial exponent of $\Lambda_2$ is $\max_\ell E_\ell$. We therefore choose the stage ratios to minimize $\max_\ell E_\ell$.

\begin{proposition}[Noncentered fixed-stage optimum]
\label{prop:noncentered-lstage}
The smallest polynomial exponent of $\Lambda_2$ over these feasible stage exponents is $1+(1-2^{-L})^{-1}$, attained by
\[
a_\ell=\frac{2^{-\ell}}{1-2^{-L}}.
\]
The corresponding RMS exponent is
\begin{equation}
\label{eq:alpha-mu-L}
\alpha_\mu(L)=\frac{2^{L+1}-1}{2^{L+1}-2}.
\end{equation}
\end{proposition}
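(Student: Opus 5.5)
We treat the statement as a min–max problem over the simplex. Write $E^\star=\min_{a}\max_\ell E_\ell$ with $E_\ell=1+A_{\ell-1}+2a_\ell$, subject to $a_\ell\geq0$ and $A_L=1$. The proof has two halves: an equalization construction showing $E^\star\leq 1+(1-2^{-L})^{-1}$, and a weighted-averaging lower bound showing no feasible schedule does better.

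\emph{Construction.} Guided by the V-shaped tradeoff in Section~\ref{sec:two-stage-blocking}, we impose $E_1=\cdots=E_L$. Equating $E_\ell=E_{\ell+1}$ gives
\[
A_{\ell-1}+2a_\ell=A_\ell+2a_{\ell+1}.
\]
Since $A_\ell=A_{\ell-1}+a_\ell$, this reduces to $a_\ell=2a_{\ell+1}$. Hence $a_\ell=c\,2^{-\ell}$, and the constraint $\sum_\ell a_\ell=1$ fixes $c=(1-2^{-L})^{-1}$. The common value is $E_1=1+2a_1=1+(1-2^{-L})^{-1}$. We can check this against the last stage: $E_L=1+(1-a_L)+2a_L=2+a_L$, and $a_L=2^{-L}/(1-2^{-L})$ gives the same number. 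For $L=2$ the schedule is $a=(2/3,1/3)$ with $E=7/3$, which matches $\alpha_\mu=7/6$ from the two-stage analysis.

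\emph{Lower bound.} For arbitrary feasible $a$, take the convex combination $\sum_\ell\lambda_\ell E_\ell$ with weights $\lambda_\ell\propto2^{-\ell}$, normalized by $\sum_\ell\lambda_\ell=1$. The coefficient of $a_j$ in $\sum_\ell\lambda_\ell(A_{\ell-1}+2a_\ell)$ is
\[
2\lambda_j+\sum_{\ell>j}\lambda_\ell .
\]
With $\lambda_\ell=2^{-\ell}/(1-2^{-L})$ this coefficient equals
\[
\frac{2^{1-j}+2^{-j}-2^{-L}}{1-2^{-L}}.
\]
It is not constant in $j$, so the argument needs care here. A cleaner route uses only the last stage and the recursion: $\max_\ell E_\ell\geq E_L=2+a_L-\ldots$ is awkward directly. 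Instead we take the weights that make the coefficients equal, namely $2\lambda_j+\sum_{\ell>j}\lambda_\ell=C$ for all $j$. Differencing consecutive equations gives $\lambda_{j+1}=\lambda_j/2$ for $j<L-1$, together with boundary adjustments at the last index. The exact weights come from solving this $L\times L$ triangular linear system with $\lambda_\ell\geq0$.

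Once the weights are in hand, $\max_\ell E_\ell\geq\sum_\ell\lambda_\ell E_\ell=1+C\sum_j a_j=1+C$, and $C$ should equal $(1-2^{-L})^{-1}$. Solving for nonnegative weights and verifying this value of $C$ is the main obstacle. If the linear system proves messy, the fallback is to argue directly: if some schedule had $\max_\ell E_\ell<E^\star$, then the inequalities $E_\ell<E^\star$ read successively give $a_1<a_1^\star$, and then inductively $A_\ell<A_\ell^\star$ for every $\ell$. In particular $A_L<1$, contradicting feasibility.

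This induction is the route we will actually write. From $A_{\ell-1}+2a_\ell<E^\star-1$ we get $A_\ell<\bigl(E^\star-1+A_{\ell-1}\bigr)/2$. The map $x\mapsto(E^\star-1+x)/2$ is increasing, and the equalized schedule $A^\star$ satisfies the same recursion with equality. Therefore $A_\ell<A^\star_\ell$ by induction, ending with $A_L<A_L^\star=1$.

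Finally, the leading constants $1/3$ and the $k$-independent factors do not affect polynomial exponents. Taking half of $1+(1-2^{-L})^{-1}=(2^{L+1}-1)/(2^L-1)$ gives \eqref{eq:alpha-mu-L}.
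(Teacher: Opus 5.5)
Your proof is correct and is essentially the paper's argument: the paper also rewrites $A_{\ell-1}+2a_\ell\le t$ as $A_\ell\le(A_{\ell-1}+t)/2$ and iterates from $A_0=0$, reaching the closed form $A_\ell\le t(1-2^{-\ell})$ where you instead compare with the equalized schedule by monotonicity, and it attains the bound by imposing equality at every stage, exactly as your construction does. One aside on the duality attempt you abandoned: differencing actually gives $\lambda_{j+1}=2\lambda_j$ (not $\lambda_j/2$) with no boundary correction, so $\lambda_\ell=2^{\ell-1}/(2^L-1)$ yields $C=2\lambda_L=(1-2^{-L})^{-1}$ directly, although your induction route does not need this.
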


\begin{proof}
Let $t$ bound every stage-dependent part, so that
$A_{\ell-1}+2a_\ell\leq t$. Since
$A_\ell=A_{\ell-1}+a_\ell$, this inequality is equivalent to
\[
A_\ell\leq\frac{A_{\ell-1}+t}{2}.
\]
Iterating from $A_0=0$ yields
$A_\ell\leq t(1-2^{-\ell})$. The feasibility condition $A_L=1$ therefore requires
$t\geq(1-2^{-L})^{-1}$. We attain this lower bound by requiring equality at every stage, which yields the stated $a_\ell$. The full $\Lambda_2$ exponent is $1+t$; halving it to pass from the second moment to RMS gives~\eqref{eq:alpha-mu-L}.
\end{proof}

Because later stages act on larger accumulated partial sums, the $\Lambda_2$ objective forces their ratios to shrink. At the optimum, the stage exponents satisfy $a_{\ell+1}=a_\ell/2$. Thus they halve from one level to the next, yielding
\[
r_\ell=k^{2^{L-\ell}/(2^L-1)}.
\]
Equivalently, the cumulative-exponent increments $A_\ell-A_{\ell-1}=a_\ell$ double when read from the last stage back toward the first. This factor-of-two pattern motivates the name \emph{doubling-gap schedule}. Within this fixed-stage family, the centered geometric schedule optimizes $\Lambda_1$, whereas the doubling-gap schedule optimizes the polynomial order of $\Lambda_2$. The sequential, optimally blocked two-stage, three-stage geometric, and pairwise families have centered exponents $1$, $3/4$, $2/3$, and $1/2$ (with a logarithmic factor), respectively.

Figure~\ref{fig:lambda1-tree-penalties} collects the exact centered growth curves and the corresponding centered-to-noncentered exponent changes.

\begin{figure*}[t]
\centering
\includegraphics[width=\textwidth]{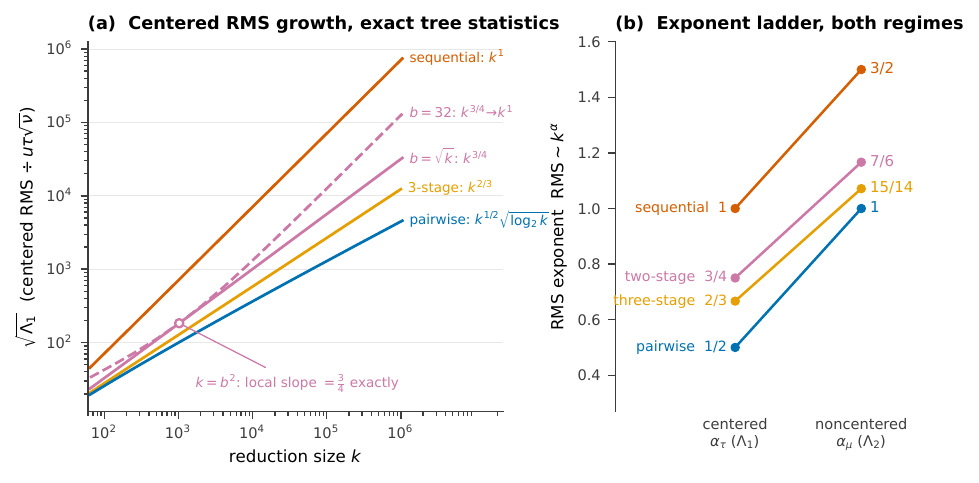}
\caption{Tree family and stage schedule jointly determine the RMS power law. Left: exact $\sqrt{\Lambda_1}$ for sequential, fixed-$b=32$ blocked, optimized two-stage blocked, three-stage geometric, and pairwise families. The fixed-$b$ curve has local slope $3/4$ at $k=b^2$ and approaches slope one, whereas $b\asymp\sqrt{k}$ retains exponent $3/4$. Right: centered exponents $\alpha_\tau$ from $\Lambda_1$ and noncentered exponents $\alpha_\mu$ from $\Lambda_2$; the two- and three-stage noncentered values use their doubling-gap optima. Equation~\eqref{eq:exponent-definition}, Proposition~\ref{prop:blocked-exponent}, and Proposition~\ref{prop:noncentered-lstage} yield the displayed values.}
\label{fig:lambda1-tree-penalties}
\end{figure*}

\section{General Matrix--Matrix Multiplication (GEMM)}
\label{sec:gemm}

We extend the scalar theory to general matrix--matrix multiplication (GEMM). Let $C=AB$ with $A\in\mathbb R^{m\times k}$ and $B\in\mathbb R^{k\times n}$. Each output entry is an inner product whose $k$ products are accumulated on the same tree $T$. The scalar recurrence applies entrywise, while a matrix formulation exposes how the operand data interact with the tree geometry. In particular, an algebraic identity expresses the exact-partial-matrix scale as a contraction of $K_T$ with a matrix $G$ formed from the operand Gram matrices. Combined with ensemble variance comparability, this Hadamard-product structure controls the expected squared Frobenius error.

\subsection{Exact Identity and Scaling Conditions}

By summing the scalar recurrence over output entries, we obtain an exact second-moment identity for GEMM without a scaling model.

\Needspace{15\baselineskip}
\begin{theorem}[Exact GEMM second moment]
\label{thm:gemm-exact}
Under Definition~\ref{def:model}, fix $A,B$. Let $\Sigma_p(A,B)$ sum the expected conditional variances of all rounded product leaves and let $\Sigma_a(A,B)$ sum those of all accumulation nodes. Then
\begin{equation}
\label{eq:gemm-exact-local}
\E_\xi\!\left[\|\widehat C-C\|_F^2\mid A,B\right]
=\Sigma_p(A,B)+\Sigma_a(A,B).
\end{equation}
\end{theorem}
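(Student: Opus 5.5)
The plan is to work one output entry at a time and then sum. Fix $A,B$ and an index pair $(r,c)$. The entry $C_{rc}=\sum_{i=1}^k A_{ri}B_{ic}$ is computed on the tree $T$: the leaves are the products $p_i=A_{ri}B_{ic}$, and each leaf is possibly rounded with local error $\xi_i^{(p)}$. Under Definition~\ref{def:model}, this scalar reduction is exactly the setting of Theorem~\ref{thm:main}. Writing $V^{(rc)}_v$ for the conditional second moment at node $v$, unrolling the exact recurrence~\eqref{eq:exact-recurrence} from the root down to the leaves telescopes to
\[
\E_\xi\!\left[(\widehat C_{rc}-C_{rc})^2\mid A,B\right]
=V^{(rc)}_{\rm root}
=\sum_{i=1}^k \E_\xi\!\left[(\xi_i^{(p)})^2\mid A,B\right]
+\sum_{v\in\operatorname{int}(T)}\E_\xi\!\left[\psi_v(x_v^{(rc)})\mid A,B\right].
\]
This follows by a simple induction on the height of $T$. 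At a leaf, $V_i$ is the product-rounding variance, or zero for pure sums. At an internal node, the recurrence adds exactly one local term, and no weighting factor appears because the recurrence has coefficient one (unlike the $\omega$ factors of the constant-$\nu$ model).

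Next I sum over all $mn$ output entries. The squared Frobenius norm is $\sum_{r,c}(\widehat C_{rc}-C_{rc})^2$, and conditional expectation is linear, so the left side of~\eqref{eq:gemm-exact-local} equals $\sum_{r,c}V^{(rc)}_{\rm root}$. Grouping the leaf terms over all $(r,c,i)$ gives, by definition, $\Sigma_p(A,B)$, the total expected conditional variance of all rounded product leaves. Grouping the internal-node terms over all $(r,c,v)$ gives $\Sigma_a(A,B)$. Cross terms between different output entries never arise, because the norm is a sum of squares of individual entries. Dependence between the roundings of different entries, if any, is therefore irrelevant.

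The only delicate step is a measurability detail. Theorem~\ref{thm:main} conditions on the leaf values $p$, while here we condition on $(A,B)$. Since the exact products $p_i$ are deterministic functions of $(A,B)$, conditioning on $(A,B)$ fixes $p$ for every entry. The product rounding draws are, by Definition~\ref{def:model}, independent of later accumulation draws and have conditional mean zero. Under these conditions, the hypotheses used in the proof of Theorem~\ref{thm:main} hold, namely that child errors have zero conditional mean and that disjoint subtrees are independent. I would state this explicitly, noting in particular that leaf errors have zero mean, so that the cross term $\E_\xi[e_{v_1}e_{v_2}\mid A,B]$ still vanishes at nodes whose children are rounded leaves. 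Once this is in place, the identity is a direct sum and needs no scaling model.
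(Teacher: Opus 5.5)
Your proposal is correct and follows the paper's proof, which applies Theorem~\ref{thm:main} to each output entry and sums the conditional second moments over all entries. Your added details are the steps that one-line proof leaves implicit: unrolling the recurrence to $V_{\rm root}=\sum_i V_i+\sum_{v\in\operatorname{int}(T)}\E_\xi[\psi_v(x_v)\mid p]$, observing that $\|\widehat C-C\|_F^2$ is a sum of squared entry errors so no inter-entry cross terms arise, and noting that fixing $(A,B)$ fixes every leaf vector $p$.
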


\begin{proof}
Apply Theorem~\ref{thm:main} to every output entry and sum the conditional second moments.
\end{proof}

We need an ensemble-level condition to convert this identity into a two-sided $u^2x^2$ comparison. Such a comparison can fail when many nonzero operations are exact. For example, take $m=n=k$ and $A=B=I_k$ in binary arithmetic. Every product and partial sum is exactly representable, so all local variances vanish even though the squared product-leaf and exact-partial-matrix scales are positive. We exclude this aggregate degeneracy by requiring ensemble variance comparability.

\begin{assumption}[Ensemble variance comparability]
\label{ass:gemm-nondegenerate}
Let $(A,B)$ be drawn from an operand ensemble independently of the rounding draws. For $S_v\subseteq\{1,\ldots,k\}$, the set of inner indices below $v$, let $Q_v=A_{:,S_v}B_{S_v,:}$ be the exact partial matrix accumulated at node $v$. Define
\begin{align*}
\mathcal P(A,B)&=\sum_{\ell=1}^k
\|A_{:,\ell}\|_2^2\|B_{\ell,:}\|_2^2,\\
\mathcal A_T(A,B)&=\sum_{v\in\operatorname{int}(T)}\|Q_v\|_F^2.
\end{align*}
Here $\mathcal P$ is the total squared product-leaf scale and $\mathcal A_T$ the squared exact-partial-matrix scale. There exist positive comparison constants, independent of $m,n,k$, and $T$, such that
\begin{align*}
\E_{A,B}[\Sigma_p(A,B)]
&\asymp u_p^2\E_{A,B}[\mathcal P(A,B)],\\
\E_{A,B}[\Sigma_a(A,B)]
&\asymp u_a^2\E_{A,B}[\mathcal A_T(A,B)].
\end{align*}
When we do not separately round products, we set both $\Sigma_p$ and $u_p^2\E_{A,B}[\mathcal P(A,B)]$ to zero.
\end{assumption}

Here $X\asymp Y$ means $cY\leq X\leq CY$, with positive constants that may depend on the ensemble and arithmetic but not on $m,n,k,T$. The upper comparisons impose the usual $u^2x^2$ scale, while the lower comparisons exclude the aggregate degeneracy described above. Thus Assumption~\ref{ass:gemm-nondegenerate} makes the ensemble-and-arithmetic dependence explicit. In Section~\ref{sec:experiments}, we evaluate the resulting comparison on each stated operand ensemble and size grid using independent calibration.

\begin{corollary}[GEMM scaling form]
\label{cor:gemm-scaling}
Under Assumption~\ref{ass:gemm-nondegenerate},
\[
\begin{aligned}
\E_{A,B,\xi}\!\left[\|\widehat C-C\|_F^2\right]
&=\Theta\!\Bigl(\\[-0.4ex]
&\quad u_p^2\E_{A,B}[\mathcal P(A,B)]\\
&\quad+u_a^2\E_{A,B}[\mathcal A_T(A,B)]
\Bigr).
\end{aligned}
\]
\end{corollary}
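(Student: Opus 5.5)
The plan is to take expectations of the exact identity in Theorem~\ref{thm:gemm-exact} over the operand ensemble and then apply the two-sided comparisons of Assumption~\ref{ass:gemm-nondegenerate} term by term.

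First, I will check that the tower property applies. The operands $(A,B)$ are drawn independently of the rounding draws. Every quantity in~\eqref{eq:gemm-exact-local} is nonnegative, so iterated expectation is valid even without integrability beyond what the assumption already implicitly requires (the right-hand expectations are finite). This gives
\[
\E_{A,B,\xi}\!\left[\|\widehat C-C\|_F^2\right]
=\E_{A,B}\!\left[\E_\xi\!\left[\|\widehat C-C\|_F^2\mid A,B\right]\right]
=\E_{A,B}[\Sigma_p(A,B)]+\E_{A,B}[\Sigma_a(A,B)].
\]
Conditioning on $(A,B)$ is legitimate because Definition~\ref{def:model} holds for each fixed input, which here is the vector of products for each output entry. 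So Theorem~\ref{thm:gemm-exact} applies pointwise in $(A,B)$.

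Second, I will invoke the assumption. Let $c_p,C_p$ and $c_a,C_a$ be the comparison constants, so that
\[
c_pu_p^2\E[\mathcal P]\le\E[\Sigma_p]\le C_pu_p^2\E[\mathcal P],
\qquad
c_au_a^2\E[\mathcal A_T]\le\E[\Sigma_a]\le C_au_a^2\E[\mathcal A_T].
\]
Adding the two chains gives
\[
\min(c_p,c_a)\,X\le\E[\Sigma_p]+\E[\Sigma_a]\le\max(C_p,C_a)\,X,
\qquad
X=u_p^2\E[\mathcal P]+u_a^2\E[\mathcal A_T].
\]
All summands are nonnegative, so combining two-sided bounds with different constants loses only the min and max. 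These combined constants are independent of $m,n,k,T$, which is exactly the $\Theta$ claim.

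In the case without product rounding, both $\Sigma_p$ and its comparison term are set to zero, and only the accumulation chain remains.

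There is no real obstacle; the content lies in the assumption. The one point to state carefully is that $\Theta$ means uniformity in $m,n,k,T$, and that this uniformity is inherited directly from the constants in Assumption~\ref{ass:gemm-nondegenerate}.
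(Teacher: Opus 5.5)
Your proposal is correct and follows the paper's own proof: you take $\E_{A,B}$ of the exact identity in Theorem~\ref{thm:gemm-exact} and apply the two-sided comparisons of Assumption~\ref{ass:gemm-nondegenerate}. The added details, iterated expectation for nonnegative terms and combining the constants through $\min(c_p,c_a)$ and $\max(C_p,C_a)$, spell out what the paper's one-line proof leaves implicit.
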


\begin{proof}
Take $\E_{A,B}$ in Theorem~\ref{thm:gemm-exact} and apply Assumption~\ref{ass:gemm-nondegenerate}.
\end{proof}

\subsection{Kernel Contraction}

We now express the exact-partial-matrix scale as a contraction of $K_T$ with a quantity derived from two Gram matrices. The matrix $A^\top A$ is the column Gram matrix of $A$, and $BB^\top$ is the row Gram matrix of $B$. Their Hadamard product, denoted $\circ$, multiplies corresponding entries.

\begin{proposition}[GEMM common-ancestor contraction]
\label{prop:gemm-kernel}
Define
\[
G=(A^\top A)\circ(BB^\top),
\qquad
G_{\ell r}=\bigl((A_{:,\ell})^\top A_{:,r}\bigr)
\bigl(B_{\ell,:}(B_{r,:})^\top\bigr).
\]
Then
\begin{equation}
\label{eq:gemm-kernel}
\sum_{v\in\operatorname{int}(T)}\|Q_v\|_F^2
=\langle K_T,G\rangle,
\end{equation}
and the product-leaf scale is $\operatorname{tr}(G)$.
\end{proposition}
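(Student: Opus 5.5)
The plan is to reduce the Frobenius norm of each partial matrix to a quadratic form in the indicator vector $b_v$, exactly mirroring the proof of Proposition~\ref{prop:ancestor-kernel}, and then to sum over internal nodes. Write $Q_v=A_{:,S_v}B_{S_v,:}=\sum_{\ell\in S_v}A_{:,\ell}B_{\ell,:}$ as a sum of rank-one outer products. Expanding with the Frobenius inner product gives
\[
\|Q_v\|_F^2=\sum_{\ell,r\in S_v}\bigl\langle A_{:,\ell}B_{\ell,:},\,A_{:,r}B_{r,:}\bigr\rangle .
\]

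The key step is the rank-one identity $\langle xy^\top,wz^\top\rangle=(x^\top w)(y^\top z)$. It follows from $\operatorname{tr}(yx^\top wz^\top)=(x^\top w)\operatorname{tr}(yz^\top)$. With $x=A_{:,\ell}$, $y=B_{\ell,:}^\top$, $w=A_{:,r}$, $z=B_{r,:}^\top$, each summand equals
\[
\bigl((A_{:,\ell})^\top A_{:,r}\bigr)\bigl(B_{\ell,:}(B_{r,:})^\top\bigr)=(A^\top A)_{\ell r}(BB^\top)_{\ell r}=G_{\ell r}.
\]
Hence $\|Q_v\|_F^2=\sum_{\ell,r\in S_v}G_{\ell r}=b_v^\top G b_v=\langle b_vb_v^\top,G\rangle$, where $b_v$ is the indicator of $S_v=\mathcal L(v)$.

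Summing over $v\in\operatorname{int}(T)$ and using linearity of the Frobenius inner product together with the definition $K_T=\sum_v b_vb_v^\top$ yields~\eqref{eq:gemm-kernel}. The same computation with $S=\{\ell\}$ gives $\|A_{:,\ell}B_{\ell,:}\|_F^2=G_{\ell\ell}=\|A_{:,\ell}\|_2^2\|B_{\ell,:}\|_2^2$. Summing over $\ell$ then identifies the product-leaf scale $\mathcal P(A,B)$ with $\operatorname{tr}(G)$. This is also $\langle I,G\rangle$, consistent with leaves contributing only diagonal terms.

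There is no real obstacle here. The only point needing care is the bookkeeping of transposes in the rank-one inner-product identity, since $B_{\ell,:}$ is a row vector. That identity is what produces the Hadamard (entrywise) rather than matrix product of the two Gram matrices. As a sanity check, taking $m=n=1$ reduces $G$ to $pp^\top$ with $p_\ell=A_{1\ell}B_{\ell1}$, which recovers~\eqref{eq:ancestor-quadratic}.
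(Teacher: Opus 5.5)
Your proposal is correct and follows the paper's proof: expand $\|Q_v\|_F^2$ as $\sum_{\ell,r\in S_v}G_{\ell r}=b_v^\top G b_v$, then sum over internal nodes so that each pair $(\ell,r)$ is weighted by its common-ancestor count $(K_T)_{\ell r}$. You also make explicit the rank-one inner-product identity, which the paper leaves implicit, and you identify the product-leaf scale $\mathcal P(A,B)$ as the diagonal sum $\operatorname{tr}(G)$, exactly as in the paper.
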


\begin{proof}
Expanding $\|Q_v\|_F^2$ yields $\sum_{\ell,r\in S_v}G_{\ell r}$. Summing over nodes counts each pair by its number of common ancestors, $(K_T)_{\ell r}$.
\end{proof}

Both Gram matrices are positive semidefinite. The Schur product theorem states that their Hadamard product is positive semidefinite, so $G\succeq0$. Because $K_T=\sum_vb_vb_v^\top$ is also positive semidefinite, $\langle K_T,G\rangle\geq0$. Forming $G$ densely requires $O(k^2(m+n))$ work and $O(k^2)$ storage, though structured representations can reduce this cost.

For fixed or correlated operands, evaluating the tree-dependent scale requires the full contraction $\langle K_T,G\rangle$ whenever the off-diagonal structure of $G$ is material. For independent centered entries with variance $\tau^2$, however, $\E_{A,B}[G]=mn\tau^4I_k$; the expected accumulation scale collapses to $mn\tau^4\Lambda_1(T)$ and can be evaluated without forming either dense matrix.

\subsection{Centered Random Matrices}

For centered random matrices with independent entries, the GEMM contraction simplifies and yields explicit RMS forward-error scales. Under Assumption~\ref{ass:gemm-nondegenerate}, entries with variance $\tau^2$ and finite fourth moment give
\begin{equation}
\label{eq:gemm-random}
\E_{A,B,\xi}\!\left[\|\widehat C-C\|_F^2\right]
=\Theta\!\left(mn\tau^4\{u_p^2k+u_a^2\Lambda_1(T)\}\right).
\end{equation}
For sequential fused multiply--add (FMA) accumulation, Equation~\eqref{eq:gemm-random} gives an RMS error relative to $\|C\|_F$ of order $u\sqrt{k}$.

Standard forward-error bounds use the cancellation-free scale $D=|A||B|$, with entrywise absolute values, so that
$D_{ij}=\sum_\ell|A_{i\ell}B_{\ell j}|$. This cancellation-free scale measures the magnitude exposed to summation even when terms in $C_{ij}$ cancel. An experiment produces a separate forward-error ratio on each trial, so we define
\[
\eta_F=\frac{\|\widehat C-C\|_F}{\|D\|_F}.
\]
The second-moment theory instead compares a ratio of RMS quantities,
\[
R_F:=\frac{
\bigl(\E_{A,B,\xi}\|\widehat C-C\|_F^2\bigr)^{1/2}}
{\bigl(\E_{A,B}\|D\|_F^2\bigr)^{1/2}}.
\]
Thus $R_F$ is a ratio of ensemble RMS values, with one population value for each $(m,n,k)$ configuration. By contrast, $\eta_F$ is a per-trial random variable that changes with each draw of $(A,B)$. The second-moment theory predicts $R_F$, not $\E[\eta_F]$, because the latter is the expectation of a ratio of correlated random variables. In the experiments, we report $R_F$ to validate the model and the distribution of $\eta_F$ to assess practical error.

\begin{proposition}[RMS forward-error scale]
\label{prop:gemm-rms-ratio}
Under Assumption~\ref{ass:gemm-nondegenerate}, let the entries of $A$ and $B$ be mutually independent copies of a centered random variable $X$ with variance $\tau^2>0$ and $\E|X|>0$. For sequential FMA accumulation with $u=u_a$,
\[
R_F=\Theta(u).
\]
\end{proposition}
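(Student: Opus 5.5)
The plan is to compute the numerator and denominator of $R_F$ separately, each up to constants independent of $m,n,k$, and take the ratio. For the numerator, Equation~\eqref{eq:gemm-random} (which follows from Corollary~\ref{cor:gemm-scaling} and Proposition~\ref{prop:gemm-kernel}) already gives
\[
\E_{A,B,\xi}\|\widehat C-C\|_F^2=\Theta\bigl(mn\tau^4\{u_p^2k+u_a^2\Lambda_1(T)\}\bigr).
\]
Sequential FMA accumulation has no separately rounded products, so the $u_p^2$ term is zero. It remains to evaluate $\Lambda_1$ for the accumulation tree and confirm it is $\Theta(k)$ for the relevant tree, so that the numerator is $\Theta(mn\tau^4u^2k)$.

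This evaluation is the main obstacle. A sequential tree has $\Lambda_1\sim k^2/2$, which would give $u\sqrt{k}$, not $\Theta(u)$. The resolution I would pursue is the one matching the remark after Equation~\eqref{eq:gemm-random}. That remark says the error relative to $\|C\|_F$ is $u\sqrt{k}$, while $\|C\|_F^2$ has expectation $mnk\tau^4$. Hence the numerator must be $\Theta(mn\tau^4u^2k^2)$, that is, $\Lambda_1=\Theta(k^2)$ for the sequential chain (Proposition~\ref{prop:extremal}, maximum). Accordingly, $R_F=\Theta(u)$ must come from the denominator scaling as $mnk^2$, not from a small $\Lambda_1$. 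So I will take the numerator to be $\Theta(mn\tau^4u^2k^2)$ via the exact sequential value $\Lambda_1=k(k+1)/2-1$.

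For the denominator, write $\E\|D\|_F^2=\sum_{i,j}\E\bigl(\sum_\ell|A_{i\ell}||B_{\ell j}|\bigr)^2$. Expand each square into diagonal and off-diagonal terms. The diagonal terms contribute $k\,\E[X^2]^2=k\tau^4$. The off-diagonal terms with $\ell\neq r$ involve independent factors and contribute $k(k-1)(\E|X|)^4$. Thus
\[
\E\|D\|_F^2=mn\bigl(k\tau^4+k(k-1)(\E|X|)^4\bigr).
\]
Because $0<\E|X|\leq\tau$ by Jensen's inequality, this quantity is squeezed between $mn(\E|X|)^4k^2$ and $mn\tau^4k^2$. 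The constants depend only on the law of $X$, so $\E\|D\|_F^2=\Theta(mnk^2)$. The hypothesis $\E|X|>0$ is what makes the lower bound nondegenerate.

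Finally, taking square roots of the numerator and denominator and dividing gives $R_F=\Theta\bigl(u\,\tau^2/(\E|X|)^2\bigr)=\Theta(u)$, with constants depending only on the law of $X$ and on the comparison constants of Assumption~\ref{ass:gemm-nondegenerate}. The finite-fourth-moment condition behind Equation~\eqref{eq:gemm-random} is assumed to hold together with the assumption.
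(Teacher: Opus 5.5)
Your proof is correct and follows the paper's argument. The numerator comes from Equation~\eqref{eq:gemm-random} with no product term and the sequential value $\Lambda_1=k(k+1)/2-1=\Theta(k^2)$; the direct $\Lambda_1$ computation, not the inference from the $u\sqrt{k}$ remark, is what justifies it, and your opening guess of $\Lambda_1=\Theta(k)$ is a false start you rightly drop. The denominator comes from $\E[D_{ij}^2]=k\tau^4+k(k-1)(\E|X|)^4$, and your Jensen sandwich $(\E|X|)^4k^2\le\E[D_{ij}^2]\le\tau^4k^2$ just makes the paper's $\Theta(k^2\tau^4)$ constants explicit.
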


\begin{proof}
For the matrix $D$ defined above, each entry satisfies
\begin{align*}
\E_{A,B}[D_{ij}^2]
&=k(\E|X|^2)^2+k(k-1)(\E|X|)^4\\
&=\Theta(k^2\tau^4).
\end{align*}
Hence $\E_{A,B}\|D\|_F^2=\Theta(mnk^2\tau^4)$, which has the same $k^2$ scale as the sequential accumulation term in~\eqref{eq:gemm-random} after multiplication by $u^2$.
\end{proof}

These normalizations yield the following forward-error scales for common inner-product accumulators.

\begin{center}
\footnotesize
\ra{1.15}
\begin{tabular}{@{}lll@{}}
\toprule
Tree & relative to $\|C\|_F$ & relative to $\||A||B|\|_F$ \\
\midrule
Sequential & $\Theta(u\sqrt{k})$ & $\Theta(u)$ \\
Blocked, $b\asymp\sqrt{k}$ & $\Theta(uk^{1/4})$ & $\Theta(uk^{-1/4})$ \\
Pairwise & $\Theta(u\sqrt{\log k})$ & $\Theta(u\sqrt{\log k/k})$ \\
\bottomrule
\end{tabular}
\end{center}

When off-diagonal entries contribute negligibly to $\langle K_T,G\rangle$, the tree-design objective reduces to the diagonal surrogate, and the Huffman construction from Section~\ref{sec:exponents} applies to $\operatorname{diag}(G)$. Otherwise, the objective remains the full contraction $\langle K_T,G\rangle$.

\section{Mapping Implementations to Trees}
\label{sec:applications-context}

To predict errors, we map an implementation to its arithmetic tree. Library kernels, vector lanes, accelerator blocks, and communication collectives may introduce different trees at different scales. FPRev, a tool that infers floating-point accumulation order from compiled binaries, can automate parts of this reconstruction \cite{XieEtAl2025FPRev}. In the NumPy study, we reconstruct the tree manually and validate it bitwise.

Once a tree is extracted, evaluating its statistics is inexpensive. Computing $\Lambda_1$ and $\Lambda_2$ requires one postorder traversal, while data-dependent analysis can accumulate $\sum_vq_v^2$ during the reduction. Dense representations of $K_T$, $M$, and $G$, however, cost $O(k^2)$, making them practical only for moderate-size or structured analyses.

\subsection{A Library Reduction}
\label{sec:numpy-tree}

To test the model on a production library, we follow the workflow in Table~\ref{tab:workflow}: we reconstruct the reduction tree used by NumPy's \texttt{np.sum} on a specific platform, compute its statistics, calibrate independently, and predict its error after bitwise validation.

We reconstructed NumPy~1.26.4 on an Apple M3 Max arm64 system running macOS~26.5.1 and Python~3.12.4. On this build, the inner kernel accumulated segments of fewer than eight terms sequentially and used eight unrolled accumulators within a 128-element block. For longer segments, recursive halving split the segment near its midpoint, aligned the split to eight elements, reduced both halves, and combined their totals. An outer sequence combined 8192-element working buffers from NumPy's universal-function (ufunc) machinery; \texttt{np.setbufsize} controls the default buffer size \cite{NumPyLoopsSource2024}.

The resulting four-scale hierarchy is
\begin{center}
\footnotesize
\begin{tabular}{@{}c@{\ $\longrightarrow$\ }c@{\ $\longrightarrow$\ }c@{\ $\longrightarrow$\ }c@{}}
\makecell{$<8$\\sequential} &
\makecell{$\leq128$\\eight lanes} &
\makecell{$\leq8192$\\recursive halving} &
\makecell{$>8192$\\buffer chain}
\end{tabular}
\end{center}

To validate this hierarchy, we compare reconstructed and observed bit patterns on fixed-seed vectors, vary the ufunc buffer size with \texttt{np.setbufsize}, and repeat versioned builds. Section~\ref{sec:experiments} reports the results of these validation steps and evaluates the error prediction on the extracted 1.26.4 tree. A version or platform change requires repeating both source reconstruction and bitwise validation.

\subsection{Communication and Accelerator Hierarchies}

For a scalar collective across $P$ ranks, a flat or ring-like arithmetic chain has sequential statistics, whereas a binomial or recursive-doubling tree has pairwise statistics. Message chunking and algorithm switching create nested trees; we extract their per-chunk arithmetic order to obtain the tree to which the formulas apply. We use the same extraction procedure for tensor-core blocks, single-instruction multiple-data (SIMD) lanes, GPU warp-shuffle instructions that exchange values among threads within a warp, and epilogue accumulation. The rounding law and internal accumulation order determine whether Definition~\ref{def:model} represents the resulting arithmetic.

\subsection{Workload Statistics}

Tree extraction and input modeling remain separate: after the tree is extracted, the input distribution determines the appropriate statistic. We use $\Lambda_1$ for approximately centered inner-product terms, both $\Lambda_1$ and $\Lambda_2$ for noncentered reductions, and $\langle K_T,M\rangle$ for correlated gradients or spatial data. Softmax denominators, sums of squares used in norms, and batch-normalization statistics are common positive reductions whose mean-to-standard-deviation ratio may exceed the threshold in Section~\ref{sec:intro}. Mixed-precision systems often retain selected reductions in binary32 \cite{MicikeviciusEtAl2018MixedPrecision}; the threshold supplies the tree-dependent rounding component of that precision-policy choice.

\section{Numerical Experiments}
\label{sec:experiments}

The exact recurrence~\eqref{eq:exact-recurrence} applies to conditionally unbiased stochastic rounding. The constant-$\nu$ model replaces operand-dependent local variances with constant coefficients and empirically approximates the MSE of deterministic round-to-nearest. Our experiments test structure and calibration separately: whether a tree statistic predicts how MSE changes, and whether an independently estimated coefficient predicts its level.

\subsection{Design and Reproducibility}
\label{sec:experiment-design}

Every reported summation error is the computed result minus its exact value. We use precision-specific strategies to evaluate these errors accurately. For binary64, Knuth's error-free \textsc{TwoSum} transformation records the exact residual at each node, and \texttt{math.fsum} combines those residuals at the roundoff-error scale. Python~3.12.4's standard-library \texttt{Decimal}, at 80-digit precision, independently checks selected cases.

For binary32, the exact sum of every node pair on the reported grids is representable in binary64. We therefore form its residual in binary64 before rounding to binary32. GEMM references also use 80-digit \texttt{Decimal}, and we check the system \texttt{libm} fused multiply--add (FMA) bitwise against one-round Decimal emulation. Bootstrap intervals resample trials independently at each size. The three-stage ($L=3$) ratios use a paired bootstrap because all schedules receive the same input. Table~\ref{tab:experiment-design} summarizes the grids.

We call an addition node \emph{stagnant} when its rounded output equals one operand while the other is nonzero, so that the addition does not change the retained operand.

\begin{table*}[t]
\centering
\caption{Experiment design. AR(1) denotes a first-order autoregressive input model. We fix trial counts in advance; all scripts, seeds, summaries, and compressed per-trial errors accompany the supplementary material.}
\label{tab:experiment-design}
\ra{1.12}
\begin{tabular}{@{}lllll@{}}
\toprule
Study & Arithmetic and inputs & Size grid & Trials & Primary comparison \\
\midrule
Centered scaling & binary64/binary32; uniform, normal & $64$--$8192$ & 800/size & $\widehat\alpha$ vs. $\sqrt{\Lambda_1}$ \\
Software low precision & binary16/bfloat16; centered, positive & $64$--$8192$ & 800/400 & MSE, bias, stagnation \\
Correlation & binary64 AR(1), $\rho\in\{-0.5,0,0.5,0.9\}$ & $k=64$ & 4000 & $K_T$ vs. i.i.d. model \\
Heterogeneous variance & binary64, two variance profiles & $k=256$ & 400 & Huffman/entropy bound \\
Noncentered schedules & binary64 normal$(1,1)$ & $k=2^{14}$ & 400 & paired L=3 ratios \\
Stochastic rounding & sums and $8\times k\times8$ GEMM & stated grids & 24--600 & exact recurrence \\
GEMM & binary64, $32\times k\times32$ & $32$--$256$ & 50 & independently calibrated MSE \\
Noncentered scaling & binary64; normal, uniform, chi-square & $64$--$2048$ & 300/size & finite-grid $\Lambda_1$--$\Lambda_2$ slopes \\
Coefficient calibration & binary64; centered uniform, normal & $\{64,256,1024\}$ & 600/size & $\nu_{\mathrm{eff}}$ \\
Local diagnostics & binary64; selected inputs and trees & selected $256$--$2048$ & 300--500 & cross terms; significands \\
Extracted NumPy tree & binary64; centered uniform & $2^7$--$2^{20}$ & \makecell[l]{1000 through $2^{15}$;\\300 thereafter} & extracted vs. pairwise tree \\
\bottomrule
\end{tabular}
\end{table*}

To isolate arithmetic effects without emulating a hardware kernel, we implement a software binary16/bfloat16 emulator with round-to-nearest-even and gradual underflow. We validate random binary16 values against NumPy and bfloat16 values against an independent bit-rounding implementation. Both emulator modes pass exact-value, midpoint-tie, subnormal, overflow, and Decimal residual-identity tests.

For pure-summation experiments, we define the effective coefficient
\[
\nu_{\mathrm{eff}}
:=\frac{\operatorname{MSE}(e_T)}
{u^2\,\E_p[\sum_{v\in\operatorname{int}(T)}q_v^2]}.
\]
For i.i.d. inputs, the factor multiplying $u^2$ in the denominator is $\tau^2\Lambda_1+\mu^2\Lambda_2$; for correlated centered inputs it is $\langle K_T,M\rangle$; and for heterogeneous independent centered inputs it is $\sum_i\tau_i^2d_i$. We define the squared-bias fraction as
\[
f_{\mathrm{bias}}:=
\frac{\{\E_p[e_T]\}^2}{\E_p[e_T^2]},
\]
and estimate it by dividing the squared sample mean by the sample mean square. Values near one indicate that systematic bias dominates MSE; values near zero indicate that trial-to-trial variation dominates.

\subsection{Centered Scaling Across Precisions}
\label{sec:exp-precision-scaling}

Figure~\ref{fig:precision-scaling} combines four precisions. For centered normal inputs, every precision preserves the ordering sequential $>$ blocked $>$ pairwise. The binary64 pairwise finite-grid theory slope is $0.579$ and the measured slopes are $0.581$ (normal) and $0.592$ (uniform); sequential measurements are $1.000$ and $0.980$. The blocked-$\sqrt{k}$ measurements lie near the predicted $0.750$. Binary32, software binary16, and software bfloat16 produce the same structural pattern. On this grid, the binary32 tests cover sampled uniform $[-1,1]$ and standard-normal operands.

\begin{figure*}[t]
\centering
\includegraphics[width=\textwidth]{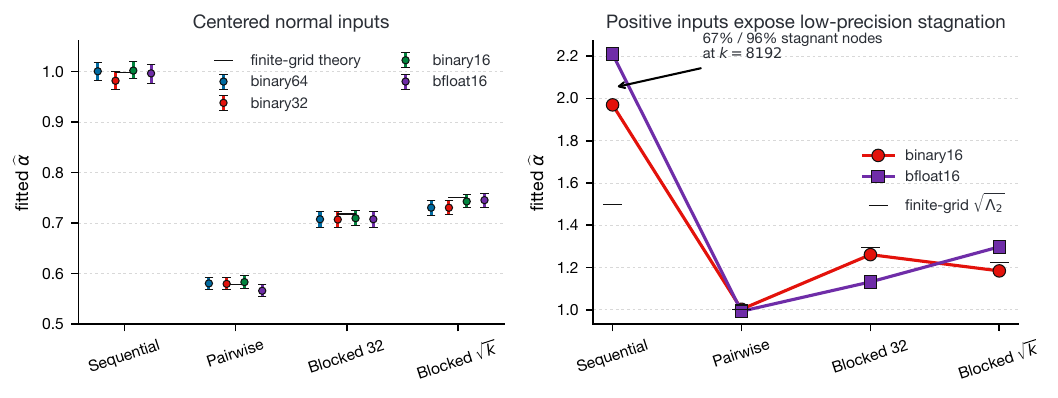}
\caption{Left: fitted centered-normal slopes with bootstrap 95\% intervals over $k=64,\ldots,8192$; black ticks are finite-grid fits of $\sqrt{\Lambda_1}$. Right: fitted slopes for uniform $[0,1]$ inputs in software binary16 and bfloat16; black ticks are the noncentered finite-grid slopes of $\sqrt{\Lambda_2}$. Sequential sums that stagnate at the largest size develop strong bias and depart from the constant-$\nu$ model. All blocked results use sequential accumulation within blocks and across block totals. Section~\ref{sec:exp-precision-scaling} discusses the topology ordering and low-precision stagnation boundary.}
\label{fig:precision-scaling}
\end{figure*}

The positive-input panel identifies the onset of low-precision stagnation. At $k=8192$, a sequential binary16 sum returns one operand unchanged at $66.6\%$ of internal nodes; bfloat16 reaches $95.8\%$. The corresponding squared-bias fractions exceed $0.9998$, and the fitted slopes rise to $1.97$ and $2.21$. By contrast, pairwise accumulation has negligible stagnation and remains near its noncentered exponent $1$. Thus, on this grid, low precision preserves the centered topology law. Monotone positive sums leave the leading-model regime when low precision causes stagnation and bias.

Across all centered configurations, the signed mean errors stored with the summaries contribute little to MSE. Large squared-bias fractions occur in the positive low-precision configurations.

\subsection{Blocked Families}
\label{sec:exp-blocked-families}

Fixed-$b$ and growing-$b$ predictions differ. For $b=32$, the local slope passes through $3/4$ near $k=b^2$ and moves toward one as larger sizes enter the fit window. When $b\asymp k^\beta$, the fitted points follow the V-shaped curve of Proposition~\ref{prop:blocked-exponent} (Figure~\ref{fig:blocked-scaling}). These results identify $3/4$ as the optimum within the two-stage sequential blocking family, not as a universal summation exponent.

\begin{figure*}[t]
\centering
\includegraphics[width=\textwidth]{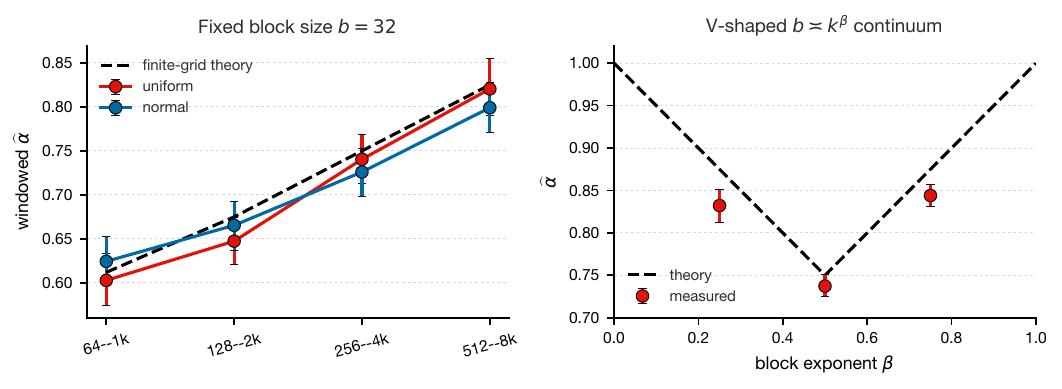}
\caption{Binary64 round-to-nearest, centered inputs. Left: four overlapping fit windows for fixed $b=32$. Right: block growth $b\asymp k^\beta$. Error bars are bootstrap 95\% intervals; dashed curves are finite-grid or asymptotic tree-statistic predictions. Section~\ref{sec:exp-blocked-families} discusses the fixed- and growing-block fits.}
\label{fig:blocked-scaling}
\end{figure*}

A separate four-stage experiment uses nine sizes $k=r^4$, $r=4,\ldots,12$, 1000 normal trials per size, and 2000 bootstrap replicates. Its fitted slope is $\widehat\alpha=0.602$ with interval $[0.591,0.614]$, compared with the exact finite-grid $\sqrt{\Lambda_1}$ slope $0.619$. \ref{app:l4} analyzes how $\nu_{\mathrm{eff}}$ varies.

\subsection{Correlated and Heterogeneous Inputs}
\label{sec:exp-correlated-heterogeneous}

Correlated inputs require the full kernel contraction, whereas heterogeneous independent variances lead to the Huffman objective.

For centered AR(1) inputs at $k=64$, the empirical partial-sum-square cost remains within $3.6\%$ of $\langle K_T,M\rangle$ for both sequential and pairwise trees (Figure~\ref{fig:ar1}). The i.i.d. statistic instead underpredicts the sequential cost by $14.2\times$ at $\rho=0.9$ and overpredicts it at $\rho=-0.5$. With the independent centered binary64 calibration, we predict the absolute RMS within $9.1\%$ across all eight configurations.

\begin{figure*}[t]
\centering
\includegraphics[width=\textwidth]{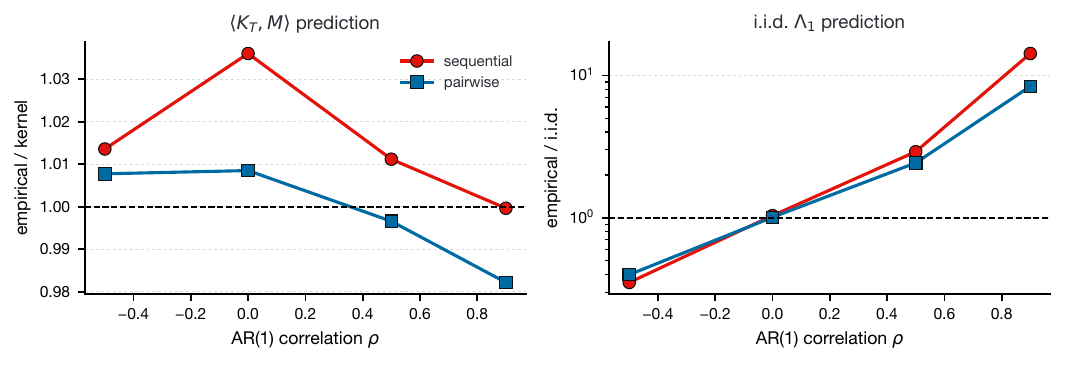}
\caption{First-order autoregressive (AR(1)) inputs. The full common-ancestor kernel tracks the empirical partial-sum cost (left), while the i.i.d. $\Lambda_1$ compression departs from the empirical cost as correlation increases (right, logarithmic scale). Section~\ref{sec:exp-correlated-heterogeneous} discusses the correlation test.}
\label{fig:ar1}
\end{figure*}

We test Proposition~\ref{prop:huffman} using heterogeneous independent variances at $k=256$. Figure~\ref{fig:huffman} normalizes every weighted leaf-depth cost by the entropy lower bound $WH_2(w)$. For $\tau_i^2\propto i^{-2}$, the Huffman, pairwise, and natural-order sequential ratios are $1.02$, $3.45$, and $109$; for $\tau_i^2\propto e^{-i/8}$, they are $1.01$, $1.80$, and $55.9$. Relative to Huffman, pairwise trees cost $1.79$--$3.37\times$ as much and sequential trees cost $55.5$--$106\times$ as much. Exact-reference round-to-nearest measurements give $\nu_{\mathrm{eff}}\in[0.160,0.232]$ across the six configurations, and the deterministic cost ordering persists across this coefficient range.

\begin{figure*}[t]
\centering
\includegraphics[width=\textwidth]{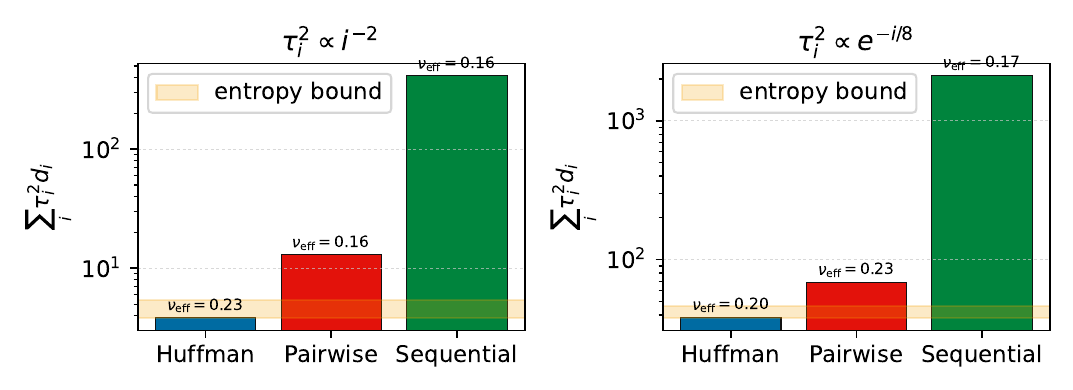}
\caption{Variance-weighted leaf-depth costs for two heterogeneous profiles ($k=256$). The shaded interval is the entropy bracket $[WH_2(w),W\{H_2(w)+1\}]$; labels above the bars give exact-reference round-to-nearest MSE coefficients. Proposition~\ref{prop:huffman} gives the bracket, and Section~\ref{sec:exp-correlated-heterogeneous} discusses the heterogeneous-variance comparison.}
\label{fig:huffman}
\end{figure*}

\subsection{Noncentered Inputs and Stage Schedules}
\label{sec:exp-noncentered-schedules}

Noncentered normal, uniform, and chi-square inputs shift the fitted slopes toward the $\Lambda_2$ predictions. Figure~\ref{fig:noncentered} compares measured and finite-grid slopes for normal$(1,1)$ and then tests the three-stage schedule optimizer. At $k=2^{14}$, the doubling-gap schedule $(256,16,4)$ has the smallest deterministic $\Lambda_2$. The geometric schedule has predicted RMS ratio $1.192$ and measured ratio $1.066$ with paired interval $[0.973,1.168]$. A nearby perturbation has predicted ratio $1.032$ and measured ratio $0.954$ with interval $[0.877,1.038]$. Both paired intervals contain unity, so the measurements support the predicted cost scale without statistically distinguishing the schedule ranking under round-to-nearest.

\begin{figure*}[t]
\centering
\includegraphics[width=\textwidth]{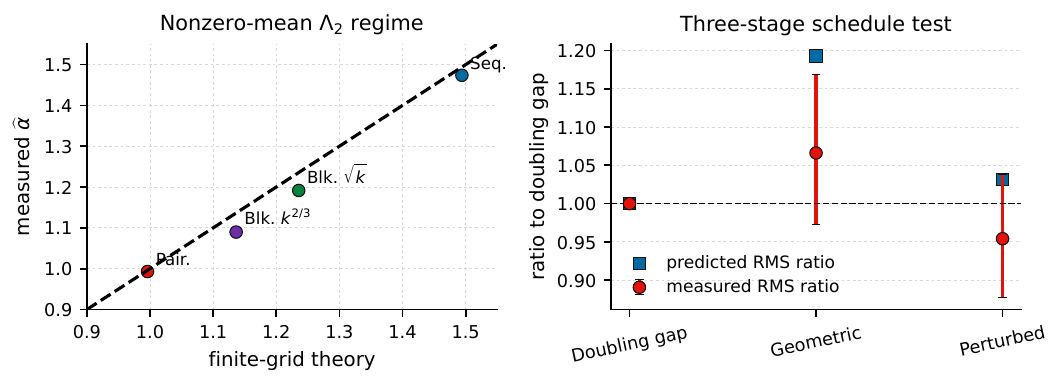}
\caption{Left: noncentered normal fitted slopes versus exact finite-grid tree-statistic slopes. Right: deterministic and measured three-stage schedule ratios. The predicted RMS ratio is $\sqrt{\Lambda_2/\Lambda_{2,\mathrm{dg}}}$; measured error bars are paired bootstrap 95\% intervals. Section~\ref{sec:exp-noncentered-schedules} discusses the slope and schedule comparisons.}
\label{fig:noncentered}
\end{figure*}

To determine whether deterministic correlations explain why the schedule rankings do not separate, we decompose the MSE as in \ref{app:auxiliary} on related diagnostic grids. Let $v<w$ enumerate each unordered node pair once. Across sequential, pairwise, and blocked-$\sqrt{k}$ trees at $k=256,1024,2048$, the cross terms $2\sum_{v<w}\E[\xi_v\xi_w]$ range from $-10.2\%$ to $+5.2\%$ of MSE. Their limited size does not by itself explain the unresolved ranking and instead leaves the configuration-dependent local coefficient as the larger observed source of variation. That appendix also reports the centered decomposition and supporting residual-identity tests.

\subsection{Exact Stochastic-Rounding Check}
\label{sec:exp-sr-check}

In Figure~\ref{fig:sr}, we compare measured second moments with the exact nodewise recurrence~\eqref{eq:exact-recurrence} using the local variance~\eqref{eq:sr-local}. Four of the six initial 95\% intervals contain unity. For the two initial misses, we conducted post hoc diagnostic runs at larger fixed sample sizes; these runs also yielded intervals containing unity. Because we selected the follow-ups after inspecting the initial results, we report the initial and diagnostic intervals separately; they do not form a combined confirmatory confidence statement.

\begin{figure}[t]
\centering
\includegraphics[width=\columnwidth]{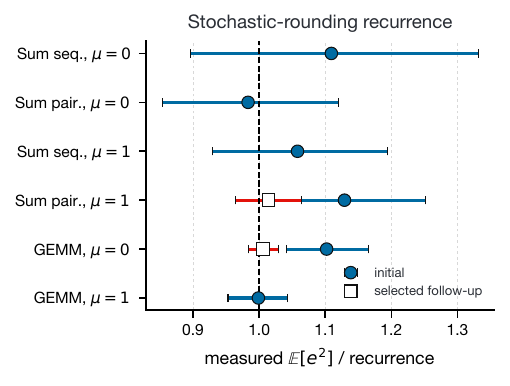}
\caption{Measured stochastic-rounding second moment divided by the exact nodewise recurrence. Circles show all six initial bootstrap 95\% intervals; open squares show post-hoc larger-sample diagnostics for the two initial misses. Section~\ref{sec:exp-sr-check} discusses the initial and diagnostic intervals.}
\label{fig:sr}
\end{figure}

\subsection{Calibrating \texorpdfstring{$\nu_{\mathrm{eff}}$}{nu-eff}}
\label{sec:nu}

For centered i.i.d. round-to-nearest summation, the definition in Section~\ref{sec:experiment-design} specializes to
\[
\nu_{\mathrm{eff}}=\frac{\operatorname{MSE}(e_T)}
{u^2\tau^2\Lambda_1(T)}.
\]
Across the binary64 calibration grid, the pooled mean is $0.1895$, while individual configurations span approximately $0.15$--$0.25$ (Figure~\ref{fig:nu}). We use $0.19$ as the calibration for centered binary64 configurations.
All coefficients use unit roundoff $u$; normalization by machine epsilon $\epsilon=2u$ would divide the reported coefficient by four.

To explain the spread in $\nu_{\mathrm{eff}}$, we examine representable spacing. Write $x=z2^e$ with significand $z\in[1,2)$; its \emph{binade} is the interval $[2^e,2^{e+1})$. Under the unit-roundoff convention used here, the spacing within this binade is $h=2u2^e$. Approximating roundoff as uniform on $[-h/2,h/2]$ gives variance $h^2/12$ and hence relative local MSE $u^2/(3z^2)$.

Averaging this $z^{-2}$ coefficient under uniform and logarithmic significand laws gives $1/6\approx0.167$ and $1/(8\ln2)\approx0.180$, respectively. Figure~\ref{fig:nu} (right) tests the cell-level prediction by normalizing out $z^{-2}$. The measured local second moments show the predicted dependence, with a mild downward trend across four bins. This mechanism explains the scale and configuration dependence, so the pooled coefficient remains configuration-dependent.

\begin{figure*}[t]
\centering
\includegraphics[width=\textwidth]{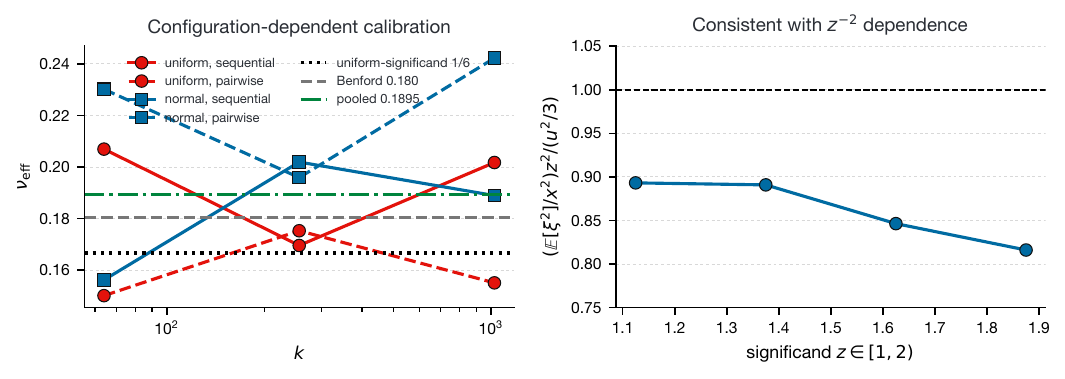}
\caption{Left: effective centered binary64 MSE coefficient across size, tree, and input distribution, with three reference levels. Right: normalized local second moment versus significand; a horizontal value of one is the uniform-in-cell prediction. Section~\ref{sec:nu} discusses the calibration spread and significand mechanism.}
\label{fig:nu}
\end{figure*}

\subsection{GEMM}
\label{sec:exp-gemm}

The FMA experiment uses $m=n=32$, matrices with independent standard-normal entries ($\tau=1$), and $k=32,64,128,256$. Independent local-error runs calibrate accumulation coefficients $\nu_a\in[0.176,0.184]$. A separate product-and-addition rounding comparison calibrates $\nu_p\in[0.176,0.184]$ and $\nu_a\in[0.180,0.186]$. All coefficients are calibrated from node-local errors independently of the root error used for validation.

For this $\tau=1$ ensemble, Figure~\ref{fig:gemm-validation} shows that
\[
D_{\rm model}=mn\,u^2\{\nu_p k+\nu_a\Lambda_1(T)\}
\]
predicts mean squared Frobenius error within $3\%$ for all eight configurations. The RMS ratio $R_F$ remains nearly constant in $k$, while the separate product rounding adds the predicted leaf contribution. Per-trial $\eta_F$ central 95\% ranges lie within $[4.70,5.75]\times10^{-17}$.

\begin{figure*}[t]
\centering
\includegraphics[width=\textwidth]{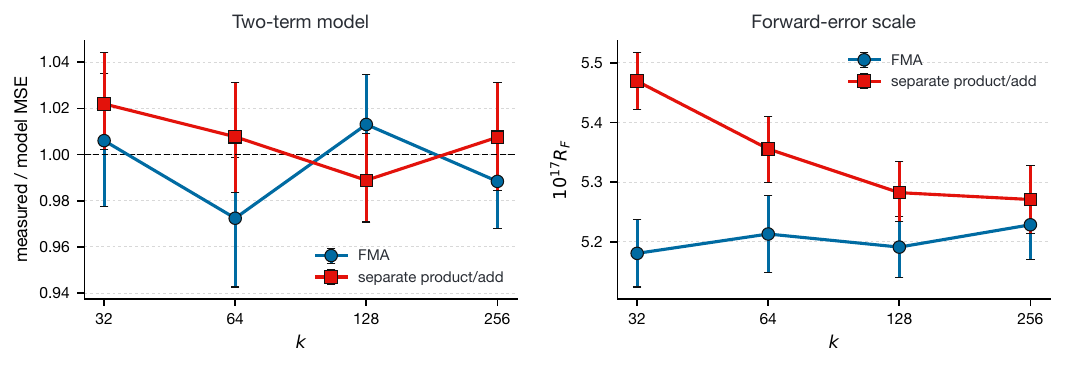}
\caption{Fused multiply--add (FMA) and separate product/addition rounding. Left: measured MSE divided by the independently calibrated two-term model. Right: $R_F$, the ratio of RMS quantities. Error bars are bootstrap 95\% intervals. Section~\ref{sec:exp-gemm} discusses the independent calibration and validation.}
\label{fig:gemm-validation}
\end{figure*}

For the corresponding non-FMA dot product, the product/accumulation crossover is the value of $k$ at which product rounding and accumulation contribute equally to the leading MSE. To calculate this crossover, we use independently calibrated constants rather than setting them equal. \ref{app:crossover} reports this auxiliary finite-size effect.

\subsection{Extracted NumPy Tree}
\label{sec:exp-numpy-tree}

Before testing MSE, we use intervention runs to verify the reconstructed tree. Changing \texttt{np.setbufsize} moves the outer boundary as predicted, and the NumPy~1.26.4 tree matches \texttt{np.sum} bit for bit on more than 1500 random vectors, including non-powers of two. Under NumPy~2.5.1, the observed sum matches the pure-pairwise reconstruction in all 20 trials at every tested size from $4096$ to $262144$, consistent with the contiguous-reduction fast path introduced during that development cycle \cite{Eendebak2026NumPyFastReduction}.

Figure~\ref{fig:npsum} evaluates Equation~\eqref{eq:workflow} on the reconstructed NumPy~1.26.4 node set. With $\nu=0.19$ fixed from the independent summation calibration, the model predicts the error within $7\%$ across $k=2^7,\ldots,2^{20}$. The extracted tree captures the bend caused by sequentially combined 8192-element buffers; a pure-pairwise model does not. The result applies to the specified NumPy~1.26.4 build and platform.

\begin{figure}[!b]
\centering
\includegraphics[width=\columnwidth]{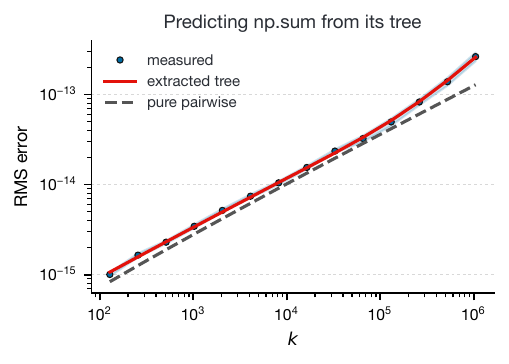}
\caption{Measured exact-reference RMS error of contiguous binary64 \texttt{np.sum}, the prediction from the extracted tree using the independently calibrated $\nu=0.19$, and the idealized pure-pairwise prediction. Section~\ref{sec:numpy-tree} sketches the extracted hierarchy. Shading is the bootstrap 95\% interval. Section~\ref{sec:exp-numpy-tree} discusses the validation and error prediction.}
\label{fig:npsum}
\end{figure}

\section{Related Work}
\label{sec:related}

\subsection{Stability and Accurate Summation}

Classical analyses distinguish recursive, pairwise, and compensated summation and provide worst-case forward or backward guarantees \cite{Higham1993AccuracySummation,Higham2002ASNA}. Compensated algorithms, expansions, and superaccumulators change the arithmetic to obtain higher accuracy or reproducibility \cite{Priest1991ArbitraryPrecision,DemmelHida2004FastAccurateSummation,OgitaRumpOishi2005AccurateSumDot,CollangeEtAl2015NumericalReproducibility}. FABsum combines blocked summation with an accurate final stage to obtain an operation-count-independent backward bound \cite{FABsumBlanchardHighamMary2020}. By contrast, we study the typical second moment of uncompensated accumulation on a fixed tree.

The Superblock family provides a structural parallel for hierarchical dot-product schedules, but under a worst-case exposure objective \cite{CastaldoWhaleyChronopoulos2009Superblock}. Its centered geometric optimizer coincides with our $\Lambda_1$-optimal fixed-stage hierarchy, whereas our second-moment objective additionally distinguishes the $\Lambda_2$-optimal doubling-gap schedule. Earlier ordering analyses for nonnegative inputs anticipate different quadratic and cubic mean-square contributions \cite{RobertazziSchwartz1988Ordering,Espelid1995Summation}.

\subsection{Probabilistic Roundoff}

Probabilistic models of roundoff date to von Neumann and Goldstine \cite{vonNeumannGoldstine1947Numerical}. Henrici, Hull and Swenson, and Barlow and Bareiss studied how roundoff errors propagate and how they are distributed \cite{Henrici1966Probabilistic,HullSwenson1966Tests,BarlowBareiss1985Distributions}. Chatelin and Brunet applied a probabilistic propagation model to the eigenvalue problem \cite{ChatelinBrunet1990Probabilistic}. Higham and Mary impose a centering assumption on roundoff errors, whereas we distinguish centered from noncentered input summands \cite{HighamMary2019Probabilistic}.

At the computational-tree level, Hallman and Ipsen derive the first-order functional $u\sqrt{\sum_vq_v^2}$ \cite{HallmanIpsen2023PrecisionAware}. We take its expectation under an input second-moment model to obtain $\langle K_T,M\rangle$. For i.i.d. inputs, this contraction reduces to $\Lambda_1$ and $\Lambda_2$. We then optimize those statistics and extend the contraction to GEMM. For stochastic rounding, local errors are conditionally unbiased by construction \cite{ConnollyHighamMary2021StochasticRounding,CrociEtAl2022StochasticRoundingSurvey}. El~Arar et al. give a general stochastic-rounding variance framework \cite{ElArarEtAl2023VarianceSR}. For reduction trees, we derive the common-ancestor kernel, its $\Lambda_1,\Lambda_2$ compression, and the resulting optimization and GEMM formulas.

\subsection{Ordering, Extraction, and Hardware}

Kao and Wang optimize deterministic magnitude-weighted objectives for same-sign summation and study the mixed-sign problem \cite{KaoWang2000Summation,KaoWang2001PrefixHuffman}. Those objectives differ from the variance-weighted depth and squared-partial-sum objectives here. ReproBLAS and ExBLAS target bitwise reproducibility or exact accumulation rather than a typical-error prediction \cite{DemmelNguyen2013FastReproducible,IakymchukEtAl2016ReproducibleMatmul}. Statistical MPI studies show that implementation topology changes observed errors \cite{PollardNorris2020MPIReduction}. FPRev demonstrates that the topology can be reconstructed \cite{XieEtAl2025FPRev}.

Tensor-core and block-FMA analyses model arithmetic inside particular accelerator units \cite{BlanchardEtAl2020TensorCores}. Mixed-precision training and stochastic-rounding studies motivate low-precision reductions \cite{MicikeviciusEtAl2018MixedPrecision,GuptaEtAl2015LimitedPrecision}. Recent work studies Horner evaluation and pairwise summation under limited-precision stochastic rounding \cite{ElArarEtAl2026LimitedPrecision}. We analyze conditionally unbiased stochastic rounding exactly and evaluate the constant-$\nu$ model under deterministic round-to-nearest-even in the binary16/bfloat16 experiments. These emulation experiments isolate precision effects; applying the framework to a deployed accelerator begins by extracting its arithmetic tree.

\section{Summary and Limitations}
\label{sec:conclusion}

The common-ancestor kernel combines a reduction tree with an input second-moment matrix to produce the expected partial-sum-square cost. For i.i.d. inputs, the resulting cost depends on the tree only through $\Lambda_1$ and $\Lambda_2$, which separate the centered and noncentered regimes. This representation yields extremal-tree, blocking, fixed-stage, and variance-weighted Huffman results and extends to GEMM through a Hadamard product of Gram matrices. Stochastic-rounding diagnostics test the exact recurrence. Under round-to-nearest, an independently calibrated constant-$\nu$ model predicts topology ordering and absolute MSE within the tested configurations. The low-precision and four-stage results identify regimes in which coefficient drift, bias, or stagnation produces departures from that approximation.

\subsection{Data and Code Availability}

An archived supplementary package contains the pinned environments, scripts, seeds, summary data, compressed per-trial errors, validation tests, figure-generation code, and reproduction instructions for all reported results.

\subsection{Limitations}

\begin{itemize}
\item The leading second-moment formulas characterize random inputs on fixed binary trees and complement worst-case methods for adversarial or data-dependent orderings.
\item The constant-$\nu$ model represents operand-dependent local variance through a calibrated coefficient. Under round-to-nearest, the experiments quantify its dependence on distribution, tree, correlation, and significand stream.
\item The implementation evidence covers CPU software arithmetic, a reconstructed NumPy kernel, and validated binary16/bfloat16 emulation with gradual underflow. Applying the framework to GPU, tensor-core, and MPI implementations begins with extracting and validating their arithmetic trees.
\item The leading expansion applies when $h(T)u^2\ll1$; the positive low-precision experiments identify stagnation and bias as the dominant corrections outside this regime.
\end{itemize}

The framework opens extensions to concentration bounds, compensated trees, combined $\Lambda_1$--$\Lambda_2$ optimization, structured GEMM contractions, and extracted multi-level hardware hierarchies.

\appendix
\section{Auxiliary Results}

\subsection{Product and Accumulation Crossover}
\label{app:crossover}

For a centered sequential non-FMA dot product, let $c_p$ and $c_a$ denote the independently calibrated product and accumulation coefficients with their common input-variance and unit-roundoff factors divided out. The leading MSE is proportional to
\[
c_p k+\frac{c_a}{2}k^2.
\]
Define the local RMS exponent as the instantaneous log--log slope
$\alpha_{\rm loc}(k):=\frac{\mathrm d\log\operatorname{RMS}(k)}{\mathrm d\log k}$. Taking the square root of the leading MSE and differentiating logarithmically, we obtain
\begin{equation}
\label{eq:general-crossover}
\alpha_{\rm loc}(k)=\frac12\frac{c_p+c_a k}{c_p+c_a k/2},
\qquad
k_{3/4}=\frac{2c_p}{c_a}.
\end{equation}
Dividing the pooled MSE coefficients by $u^2$ yields $c_p=0.1801$ and $c_a=0.1831$, giving $k_{3/4}=1.97$. Figure~\ref{fig:product-crossover} plots this curve alongside the three measured local-slope intervals. This finite-size crossover is distinct from the asymptotic $3/4$ optimum of two-stage sequential blocking.

\begin{figure}[t]
\centering
\includegraphics[width=\columnwidth]{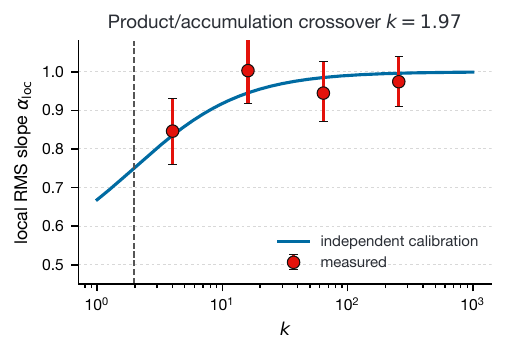}
\caption{Local non-FMA dot-product slopes and the curve from independently calibrated product and accumulation coefficients. The derivation appears in \ref{app:crossover}.}
\label{fig:product-crossover}
\end{figure}

\subsection{Four-Stage Scaling Experiment}
\label{app:l4}

The fixed-design four-stage experiment uses geometric trees with $k=r^4$ for $r=4,\ldots,12$, centered normal inputs, 1000 trials per size, 2000 bootstrap replicates, fixed seeds, and exact residual references. It records the exact $\Lambda_1$, the signed mean, the squared-bias fraction, $\nu_{\mathrm{eff}}$, and compressed per-trial errors. The fitted slope is $0.602$ with 95\% interval $[0.591,0.614]$, below the exact finite-grid $\sqrt{\Lambda_1}$ slope $0.619$ and the asymptotic value $0.625$ (Figure~\ref{fig:l4-followup}). Despite intermediate fluctuations, the effective coefficient $\nu_{\mathrm{eff}}$ decreases from $0.217$ at $k=256$ to $0.190$ at $k=20736$. This decrease shifts the finite-grid round-to-nearest slope. Because Equation~\eqref{eq:alpha-tau-L} depends only on the tree's combinatorics, the observed drift in $\nu_{\mathrm{eff}}$ on this grid explains the discrepancy.

\begin{figure}[t]
\centering
\includegraphics[width=\columnwidth]{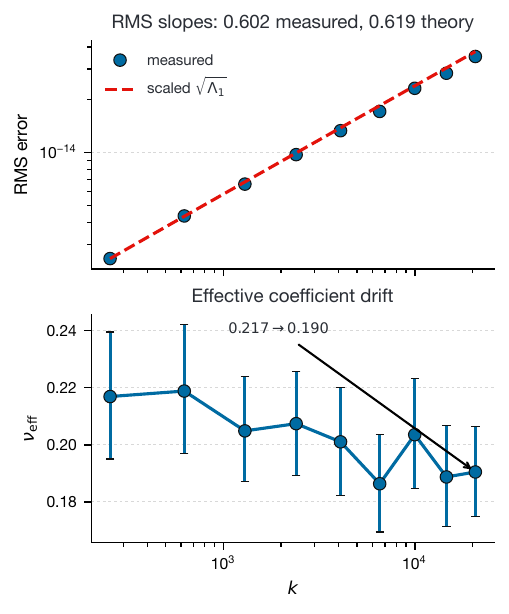}
\caption{Four-stage round-to-nearest scaling experiment. Left: measured RMS and the $\sqrt{\Lambda_1}$ curve scaled at the first point; their finite-grid slopes are $0.602$ and $0.619$. Right: $\nu_{\mathrm{eff}}=\operatorname{MSE}/(u^2\Lambda_1)$ with bootstrap 95\% intervals. Its decrease from $0.217$ to $0.190$ across the grid accounts for the observed slope difference. \ref{app:l4} describes the fixed-design experiment.}
\label{fig:l4-followup}
\end{figure}

\subsection{Local-Error Covariances and Unit Tests}
\label{app:auxiliary}

For deterministic round-to-nearest, the MSE expands to
\[
\operatorname{MSE}(e_T)=\sum_v\E_p[\xi_v^2]
+2\sum_{v<w}\E_p[\xi_v\xi_w],
\]
where $\sum_{v<w}$ ranges over each unordered pair of distinct internal nodes exactly once.
For centered $k=512$, the covariance contribution ranges from $-4\%$ to $+13\%$ across two distributions and two trees. For noncentered normal$(1,1)$ at $k=256,1024,2048$, it ranges from $-10.2\%$ to $+5.2\%$ across sequential, pairwise, and blocked-$\sqrt{k}$ trees. These bounded contributions support the local-variance approximation, while the remaining variation reflects the operand-dependent effective coefficient.

The unit tests cover both tree statistics and arithmetic. They reproduce every closed-form $\Lambda_1$ value exactly on the tested trees. The low-precision software tests compare binary16 with NumPy on 12,000 random values and bfloat16 with an independent bit-level reference on 10,000 binary32 values. Separate tests exercise midpoint, subnormal, and overflow boundaries and verify the complete reduction residual against Decimal arithmetic.

\section{Core Notation}
\label{app:notation}

Table~\ref{tab:notation} summarizes the core notation reused across the scalar theory, including the conditioning and asymptotic conventions. Symbols specific to hierarchy design, correlated inputs, significand calibration, communication costs, or GEMM are defined locally.

\begin{table}[!ht]
\centering
\caption{Core notation used across Sections~\ref{sec:theorem}--\ref{sec:exponents}.}
\label{tab:notation}
\ra{1.08}
\begin{tabular}{@{}p{0.32\columnwidth}p{0.60\columnwidth}@{}}
\toprule
Symbol & Meaning \\
\midrule
$T,\mathcal L(v),\operatorname{int}(T)$; $h(T),d(v)$ & tree, leaf set, internal nodes, height, and depth from the root \\
$p_i,q_v,\widehat q_v$ & exact input, exact partial sum, and computed partial sum \\
$\operatorname{fl}_v,\mathcal F_v^-,\psi_v$ & rounding map, pre-rounding information, and local variance \\
$e_v,\xi_v,V_v$ & accumulated error, local roundoff, and conditional second moment \\
$u_p,u_a$; $\nu_p,\nu_a$ & product and accumulation unit roundoffs and the corresponding local second-moment coefficients \\
$\omega$ & one-level variance propagation factor $1+\nu_au_a^2$ \\
$K_T,M$; $\langle\cdot,\cdot\rangle$ & common-ancestor kernel, input second moment, and trace inner product \\
$\Lambda_1,\Lambda_2$ & the sum of internal subtree sizes and the sum of their squares \\
$\mu,\tau^2$ & i.i.d. input mean and variance \\
$\alpha,\widehat\alpha,\alpha_{\rm loc}(k)$ & asymptotic exponent, finite-grid fit, and local log--log RMS slope \\
$f\asymp g$ & two-sided comparison up to positive size-independent constants \\
\bottomrule
\end{tabular}
\end{table}

\begin{acknowledgement}
This research was supported by the 2026 Laboratory Directed Research and
Development (LDRD) Program at Oak Ridge National Laboratory through the
initiative ``CCSD Core: Foundational Research for Smart Extreme-scale
Ecosystems.'' Oak Ridge National Laboratory is managed by UT-Battelle, LLC,
for the U.S.\ Department of Energy under Contract No.\ DE-AC05-00OR22725.
\end{acknowledgement}

\printbibliography

\end{document}